\theoremstyle{plain}
 \newtheorem{thm}{\textbf{Theorem}}[section]
 \newtheorem{prop}{\textbf{Proposition}}[section]
 \newtheorem{lem}{\textbf{Lemma}}[section]
 \newtheorem{cor}{\textbf{Corollary}}[section]
\theoremstyle{definition}
\theoremstyle{remark}
 \newtheorem{rem}{\textbf{Remark}}[section]
 \numberwithin{equation}{section}
\renewcommand{\leq}{\leqslant}
\title{Product of truncated Hankel and truncated Toeplitz operators}
\subjclass[2010]{Primary 47}
\author[Chu]{\bfseries Cheng Chu}
\address{
Department of Mathematics and Statistics \\ % \hfill (Received 00 00 2010)\\
Laval University  \\ %\hfill (Revised  00 00 2010)\\
Quebec City, QC \\
Canada}
\email{chengchu813@gmail.com}
\begin{document}

%{\begin{flushleft}\baselineskip9pt\scriptsize
%PUBLICATIONS DE L'INSTITUT MATH\'EMATIQUE\newline
%Nouvelle s\'erie, tome 87(101) (2010), od--do \hfill DOI:
%\end{flushleft}}
\vspace{18mm}
\setcounter{page}{1}
\thispagestyle{empty}

\begin{abstract}
A truncated Toeplitz operator is the compression of a classical Toeplitz operator on the Hardy space to a model space. A truncated Hankel operator is the compression of a Hankel operator on the Hardy space to the orthogonal complement of a model space. We study the product of a truncated Hankel operator and a truncated Toeplitz operator, and characterize when such a product is zero or compact.
\end{abstract}

\maketitle

\section{Introduction}
Let $\DD$ be the open unit disk in the complex plane. Let $L^2$ denote the Lebesgue space of square integrable functions on the unit circle $\partial\DD$. Let $L^\infty$ denote the space of essentially bounded Lebesgue measurable functions on $\partial\DD$. The Hardy space $H^2$ is the subspace of analytic functions on $\DD$ whose Taylor coefficients are square summable. Then it can also be identified with the subspace of $L^2$ of functions whose negative Fourier coefficients vanish. Let $H^\infty$ be the space of all bounded analytic functions on $\DD$.

Let $P$ be the orthogonal projections from $L^2$ to $H^2$. For $f\in L^2$, the Toeplitz operator $T_f$ and Hankel operator $H_f$ are defined by $$T_fh=P(fh),$$ and  $$H_f h=(I-P)(fh)$$ on the dense subset $H^2\cap L^\infty$ of $H^2$.
It is well-known that $T_f$ is bounded if and only if $f\in L^\infty$, and $H_f$ is bounded if and only if $(I-P)f$ is in the space $BMO$ of functions of bounded mean oscillation (see e.g. \cite{zhu2}).

In the past several years, there has been a vigorous development in the study of truncated Toeplitz operators since Sarason's seminal paper \cite{sar07}.
An analytic function $\Gt$ is called an inner function if $|\Gt|=1$ a.e. on $\partial\DD$. For each non-constant inner function $\Gt$, the so-called model space is $$K_\Gt=H^2\ominus \Gt H^2.$$ It is a reproducing kernel Hilbert space with reproducing kernels $$k_w^{\Gt}(z)=\frac{1-\overline{\Gt(w)}\Gt(z)}{1-\bar{w}z}.$$

Let $P_\Gt$ denote the orthogonal projection from $L^2$ onto $K_\Gt$,
\beq\label{Pt}
P_{\Gt}f=P f-\Gt P(\bar{\Gt}f).
\eeq
For $\varphi\in L^2$, the truncated Toeplitz operator $A_\Gvp^\Gt$ and the truncated Hankel operator $H_\Gvp^\Gt$ are defined by $$A^{\Gt}_\varphi f=P_\Gt (\varphi f)$$ and $$H^\Gt_\Gvp f=(I-P_\Gt) (\Gvp f),$$ on the dense subset $K_\Gt \cap H^{\infty}$ of $K_\Gt$. In particular, $K_\Gt \cap H^{\infty}$ contains all reproducing kernels $k_w^{\Gt}$.  The symbol $\Gvp$ is never unique for $A_\Gvp^\Gt$ and it is proved in \cite{sar07} that
\begin{thm}\label{T0}
The operator $A^{\Gt}_\varphi=0$ if and only if
$$
\Gvp\in \Gt H^2+\ol{\Gt H^2}.
$$
\end{thm}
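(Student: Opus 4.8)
The plan is to prove both implications. For the easy direction, suppose $\varphi = \Theta h_1 + \overline{\Theta h_2}$ with $h_1, h_2 \in H^2$ (at least when things are bounded enough to make sense pointwise on the dense subset). I would compute $A_\varphi^\Theta f = P_\Theta(\varphi f)$ for $f \in K_\Theta \cap H^\infty$ and show each piece vanishes. For the term $\Theta h_1 f$: since $f \in H^2$, we have $h_1 f \in H^2$ (formally), so $\Theta h_1 f \in \Theta H^2$, and applying the formula \eqref{Pt}, $P_\Theta(\Theta h_1 f) = P(\Theta h_1 f) - \Theta P(\overline{\Theta}\Theta h_1 f) = \Theta h_1 f - \Theta h_1 f = 0$ since $\Theta h_1 f \in H^2$ already and $\overline{\Theta}\Theta = 1$ a.e. For the term $\overline{\Theta h_2} f = \overline{\Theta}\,\overline{h_2}\, f$: I want to show $P_\Theta$ kills it. Using \eqref{Pt}, $P_\Theta(\overline{\Theta}\,\overline{h_2} f) = P(\overline{\Theta}\,\overline{h_2} f) - \Theta P(\overline{\Theta}\cdot\overline{\Theta}\,\overline{h_2} f)$. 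The first term: $\overline{\Theta}\,\overline{h_2} f$ — I'd note $P(\overline{\Theta} g)$ for $g \in H^2$ lands in $K_\Theta$... actually the cleaner route is that $P_\Theta(\overline{\Theta} \overline{h_2} f) = 0$ because $\overline{\Theta}\,\overline{h_2} f \perp K_\Theta$; indeed for $g \in K_\Theta$, $\langle \overline{\Theta}\overline{h_2}f, g\rangle = \langle f, \Theta h_2 g\rangle$ and $\Theta h_2 g \in \Theta H^2 \perp f$. So the easy direction reduces to these orthogonality bookkeeping steps, with the usual caveat that one does it for a dense set of nice symbols and extends.

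For the converse, suppose $A_\varphi^\Theta = 0$. I would test against reproducing kernels and their products. The key identities are $A_\varphi^\Theta k_0^\Theta = P_\Theta(\varphi \cdot 1)$ evaluated — more usefully, $\langle A_\varphi^\Theta k_w^\Theta, k_z^\Theta \rangle = 0$ for all $w, z \in \DD$. Expanding, $\langle \varphi k_w^\Theta, k_z^\Theta\rangle = 0$. Since $\overline{k_w^\Theta(z)}$-type expressions and the span of $\{k_w^\Theta\}$ is dense, and products $k_w^\Theta \overline{k_z^\Theta}$ span a large space, the plan is to decompose $\varphi$ according to $L^2 = \overline{\Theta H^2} \oplus K_\Theta \oplus \Theta \overline{z H^2}$ (noting $H^2 = K_\Theta \oplus \Theta H^2$ and $\overline{H^2_0} = \overline{z H^2}$ gives $L^2 = \overline{zH^2} \oplus H^2$, hence $L^2 = \overline{zH^2}\oplus K_\Theta \oplus \Theta H^2$). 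Write $\varphi = \overline{\Theta}\,\overline{\psi_1} + \chi + \Theta \psi_2$ with $\psi_1 \in zH^2$... hmm, I need to be careful; more precisely use the decomposition so that $\varphi \in \Theta H^2 + \overline{\Theta H^2}$ iff its "middle part" in some $K_\Theta$-type component vanishes. The condition $\langle \varphi k_w^\Theta, k_z^\Theta\rangle = 0$ should force the component of $\varphi$ that would contribute to the sesquilinear form on $K_\Theta \times K_\Theta$ to vanish, which is exactly the $K_{\Theta}\ominus$-something piece, and identifying that piece with membership in $\Theta H^2 + \overline{\Theta H^2}$ is the crux.

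The main obstacle I anticipate is the converse: correctly handling the fact that $\varphi$ is only in $L^2$ (not $L^\infty$), so products like $\varphi k_w^\Theta$ need not be in $L^2$ — but $k_w^\Theta \in H^\infty$ since $\Theta$ is inner, so $\varphi k_w^\Theta \in L^2$ and $\varphi k_w^\Theta k_z^\Theta \in L^1$, which is enough to make the pairing meaningful; I'd want to justify that $A_\varphi^\Theta = 0$ on $K_\Theta \cap H^\infty$ (its domain of definition) already implies the vanishing of all these pairings. Then the genuine work is the Fourier/subspace-decomposition argument showing that "$\langle \varphi g, h\rangle = 0$ for all $g, h$ in a dense subset of $K_\Theta$" is equivalent to $\varphi \perp \overline{K_\Theta} \cdot K_\Theta$ in $L^1$-duality, and then that $\left(\overline{K_\Theta}\cdot K_\Theta\right)$ spans a set whose annihilator (in the appropriate sense) is precisely $\Theta H^2 + \overline{\Theta H^2}$. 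I would carry this out by writing everything in terms of the orthogonal decomposition $L^2 = \overline{zH^2} \oplus K_\Theta \oplus \Theta H^2$ and checking that $\Theta H^2 + \overline{\Theta H^2} = (\Theta H^2) \oplus (\overline{\Theta z H^2}) \oplus$ (part inside $\overline{zH^2}$), i.e. its orthogonal complement relative to the natural pairing is spanned by $K_\Theta$ against itself, and concluding.
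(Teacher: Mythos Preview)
The paper does not give a proof of this statement; it is quoted as a known result from Sarason \cite{sar07} (this is his Theorem~3.1), so there is no in-paper argument to compare your proposal against.

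That said, your outline is largely viable. The sufficiency direction is correct once you observe, as you essentially do, that for $f\in K_\Theta\cap H^\infty$ one has $\bar\Theta f\in \overline{zH^2}\cap L^\infty$, whence $\overline{\Theta h_2}\,f\in\overline{zH^2}$ is already killed by $P$; the $\Theta h_1 f$ piece is handled exactly as you describe. For the converse, your duality idea (deduce $\varphi\perp \bar h g$ for all $g,h\in K_\Theta\cap H^\infty$, then identify the annihilator as $\Theta H^2+\overline{\Theta H^2}$) can be pushed through, but the step you yourself flag as ``the genuine work'' is not as clean as your decomposition suggests: $\Theta H^2+\overline{\Theta H^2}$ is \emph{not} aligned with the orthogonal splitting $L^2=\overline{zH^2}\oplus K_\Theta\oplus\Theta H^2$, since $\overline{\Theta H^2}$ contributes a constant component (via $P\bar\Theta=\overline{\Theta(0)}$) that lands partly in $K_\Theta$ and partly in $\Theta H^2$. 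You would also need to show that $\Theta H^2+\overline{\Theta H^2}$ is closed before invoking an orthogonal-complement argument. Sarason's original proof avoids this annihilator computation altogether: he first reduces to symbols of the form $\psi+\bar\chi$ with $\psi,\chi\in K_\Theta$, and then uses the rank-two identity $A_\varphi - S_\Theta A_\varphi S_\Theta^{*}=(\psi\otimes \widetilde k_0)+(\widetilde k_0\otimes\chi)$ (where $S_\Theta=A_z^\Theta$ is the compressed shift and $\widetilde k_0=P_\Theta\bar z\bar\Theta$) to read off $\psi$ and $\chi$ directly from the operator. Your approach trades that operator-theoretic identity for a function-theoretic duality computation; both work, but yours requires the extra closedness and complement identification that you have not yet carried out.
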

For truncated Hankel operators, we have \cite{mazheng2}
\begin{thm}\label{H0}
The operator $H^\Gt_\Gvp=0$ if and only if $\Gvp$ is a constant.
\end{thm}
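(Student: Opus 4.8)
The reverse implication is clear: if $\Gvp$ is a constant $c$, then $\Gvp f=cf\in K_\Gt$ for every $f\in K_\Gt$, so $(I-P_\Gt)(\Gvp f)=0$. For the forward direction my plan is to test the identity $H^\Gt_\Gvp=0$ against only two elements of $K_\Gt\cap H^\infty$: the reproducing kernel at the origin $k_0^\Gt=1-\ol{\Gt(0)}\Gt$, and the function $S^*\Gt=(\Gt-\Gt(0))/z$, where $S^*h=(h-h(0))/z$ is the backward shift. Both lie in $K_\Gt\cap H^\infty$ --- one checks that $S^*\Gt\perp\Gt H^2$, and boundedness is clear since $|\Gt|\le1$ on $\ol{\DD}$ and $S^*$ preserves $H^\infty$.

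First I would decouple the defining relation. By \eqref{Pt}, $I-P_\Gt=(I-P)+\Gt P\bar\Gt$; the ranges $\ol{zH^2}$ and $\Gt H^2$ of the two summands are orthogonal, and multiplication by $\Gt$ is isometric, so $H^\Gt_\Gvp=0$ is equivalent to requiring, for every $f\in K_\Gt\cap H^\infty$, that both $(I-P)(\Gvp f)=0$ and $P(\bar\Gt\Gvp f)=0$. Taking $f=k_0^\Gt$ in the first equation gives $\Gvp k_0^\Gt\in H^2$. Since $\Gt$ is non-constant inner, $|\Gt(0)|<1$, so $k_0^\Gt$ is bounded away from $0$ and invertible in $H^\infty$; hence $\Gvp=(\Gvp k_0^\Gt)(1/k_0^\Gt)\in H^2$. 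I expect this to be the key step: once $\Gvp\in H^2$ is known, all the products that follow lie in $L^2$ and the projection computations are legitimate, and it is precisely the invertibility of $k_0^\Gt$, i.e.\ the hypothesis $|\Gt(0)|<1$, that supplies it.

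Next I would push both test functions through the second equation $P(\bar\Gt\Gvp f)=0$. For $f=k_0^\Gt$, using $\bar\Gt\Gt=1$ a.e.\ on $\partial\DD$ one has $\bar\Gt\Gvp k_0^\Gt=\bar\Gt\Gvp-\ol{\Gt(0)}\Gvp$, and since $\Gvp\in H^2$ the equation becomes $P(\bar\Gt\Gvp)=\ol{\Gt(0)}\Gvp$. For $f=S^*\Gt$ the analogous computation gives $\bar\Gt\Gvp\,S^*\Gt=\bar z\bigl(\Gvp-\Gt(0)\bar\Gt\Gvp\bigr)$ a.e.\ on $\partial\DD$. Applying $P$, writing $\bar\Gt\Gvp=\ol{\Gt(0)}\Gvp+\rho$ with $\rho=(I-P)(\bar\Gt\Gvp)\in\ol{zH^2}$, and using $P(\bar z h)=S^*h$ for $h\in H^2$ together with the fact that $\bar z\rho$ stays in $\ol{zH^2}$, one arrives at $P(\bar\Gt\Gvp\,S^*\Gt)=(1-|\Gt(0)|^2)\,S^*\Gvp$. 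Since this must vanish and $|\Gt(0)|<1$, we get $S^*\Gvp=0$, i.e.\ $\Gvp$ is constant.

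The conceptual point worth flagging is why a second test function is needed at all: $f=k_0^\Gt$ alone only produces $P(\bar\Gt\Gvp)=\ol{\Gt(0)}\Gvp$, whose solution set is large (an eigenspace of $T_{\bar\Gt}$), so one needs a further independent element of $K_\Gt\cap H^\infty$ to cut the symbol down to a constant, and $S^*\Gt$ works because it is again bounded and its product with $\bar\Gt$ collapses to a simple expression on the circle. Everything else is bookkeeping of Fourier modes under $P$.
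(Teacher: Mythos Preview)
The paper does not prove this theorem; it simply quotes the result from \cite{mazheng2}. So there is no in-paper proof to compare against, and the relevant question is whether your argument stands on its own. It does.

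Every step checks out. The decomposition $I-P_\Gt=(I-P)+\Gt P\bar\Gt$ splits $H^\Gt_\Gvp f$ into two pieces with orthogonal ranges ($[H^2]^\perp$ and $\Gt H^2$), so $H^\Gt_\Gvp=0$ is indeed equivalent to the two conditions $(I-P)(\Gvp f)=0$ and $P(\bar\Gt\Gvp f)=0$ for all $f\in K_\Gt\cap H^\infty$. Testing the first against $k_0^\Gt$ and using that $1/k_0^\Gt\in H^\infty$ (since $|k_0^\Gt|\ge 1-|\Gt(0)|>0$ on $\ol{\DD}$) gives $\Gvp\in H^2$; this is the crux, and you identify it correctly. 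Once $\Gvp\in H^2$, the second equation applied to $k_0^\Gt$ yields $P(\bar\Gt\Gvp)=\ol{\Gt(0)}\Gvp$, and applied to $S^*\Gt$ yields, after the manipulations you describe (all of which are correct, including $P(\bar z\rho)=0$ for $\rho\in\ol{zH^2}$), the identity $(1-|\Gt(0)|^2)S^*\Gvp=0$, whence $\Gvp$ is constant.

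Your proof is pleasantly economical: only two test vectors from $K_\Gt\cap H^\infty$ are needed, and the mechanism that makes the second one work --- the collapse of $\bar\Gt\cdot S^*\Gt$ to $\bar z(1-\Gt(0)\bar\Gt)$ on the circle --- is exactly the kind of algebraic simplification one hopes for. The observation that $k_0^\Gt$ alone is insufficient (it only pins down an eigenspace relation $T_{\bar\Gt}\Gvp=\ol{\Gt(0)}\Gvp$) is also well taken.
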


The compactness of Toeplitz and Hankel operator are known. The only compact Toeplitz operator is the zero operator (see e.g. \cite{dou72}, \cite{zhu}). For Hankel operators, Hartman's Criterion (see e.g. \cite{pel02}, \cite{zhu}) asserts that
$H_f$ is compact if and only if $f\in H^\infty+C,$ where $C$ denotes the space of continuous functions on the unit circle.
The problem of characterizing the compactness for product (or sum of products) of Hankel and Toeplitz operators turns out to be much more difficult. The abstract theory of maximal ideal space plays an important role in those problems \cite{ax78}, \cite{vol82}, \cite{zh96}, \cite{guz98}, \cite{gor99}, \cite{guo03}, \cite{guo05}, \cite{chu15}.

Assuming boundedness of the symbol, compact truncated Toeplitz operators and truncated Hankel operators are characterized in \cite{mazheng},
 \cite{mazheng2}. We will explain the notations below in Section 2.
\begin{thm}\label{ctto}\cite{mazheng}*{Theorem 1}
Let $f\in L^\infty$. Then $A^\Gt_f$ is compact if and only if for every $m\in M(H^\infty+C)$, either $$\Gt|_{S_m} \m{is constant} ,$$ or $$f|_{S_m}\in \Gt|_{S_m}H^2(m)+\bar{\Gt}|_{S_m}\ol{H^2(m)}.$$
\end{thm}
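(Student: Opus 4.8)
The plan is to reduce the compactness of $A^\Gt_f$ to a family of conditions localized at the points of $M(H^\infty+C)$, and then to recognize each local condition as an instance of Sarason's Theorem~\ref{T0} carried out over the local Hardy space $H^2(m)$.

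\textbf{Step 1 (placing $A^\Gt_f$ in the Toeplitz algebra).} Since $T_\Gt$ is an isometry on $H^2$ with range $\Gt H^2$ and $T_\Gt^*=T_{\bar\Gt}$, the orthogonal projection of $H^2$ onto $K_\Gt$ is $P_\Gt=I-T_\Gt T_{\bar\Gt}$, in agreement with \eqref{Pt} on $H^2$. Consequently $A^\Gt_f$ is the restriction to $K_\Gt$ of the operator $P_\Gt T_f P_\Gt$ on $H^2$, which vanishes on $\Gt H^2=H^2\ominus K_\Gt$; hence $A^\Gt_f$ is compact if and only if $P_\Gt T_f P_\Gt=(I-T_\Gt T_{\bar\Gt})\,T_f\,(I-T_\Gt T_{\bar\Gt})$ is compact on $H^2$. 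Expanding this product shows it is a finite sum of products of Toeplitz operators with bounded symbols ($f$, $\Gt$, $\bar\Gt$), so it lies in the Toeplitz algebra.

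\textbf{Step 2 (localization).} I would then invoke the localization theory of Axler--Chang--Sarason and Volberg \cite{ax78}, \cite{vol82}: an element of the Toeplitz algebra is compact on $H^2$ if and only if, for every $m\in M(H^\infty+C)$, its localization at $m$ is the zero operator, where the localization sends $T_g$ ($g\in L^\infty$) to the Toeplitz operator on $H^2(m)$ with symbol $g|_{S_m}$. Applied to the expansion from Step~1, and using that $T_\Gt T_{\bar\Gt}$ localizes to $T_{\Gt|_{S_m}}T_{\bar\Gt|_{S_m}}=I-P_{\Gt|_{S_m}}$ on $H^2(m)$ --- the complement of the projection onto the local model space $K_{\Gt|_{S_m}}:=H^2(m)\ominus\Gt|_{S_m}H^2(m)$ --- the localization of $P_\Gt T_f P_\Gt$ at $m$ is precisely the local truncated Toeplitz operator $A^{\Gt|_{S_m}}_{f|_{S_m}}$ on $K_{\Gt|_{S_m}}$. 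Thus $A^\Gt_f$ is compact if and only if $A^{\Gt|_{S_m}}_{f|_{S_m}}=0$ for every $m\in M(H^\infty+C)$.

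\textbf{Step 3 (a local Sarason theorem).} It then remains to characterize the vanishing of $A^{\Gt|_{S_m}}_{f|_{S_m}}$. If $\Gt|_{S_m}$ is a unimodular constant then $\Gt|_{S_m}H^2(m)=H^2(m)$, so $K_{\Gt|_{S_m}}=\{0\}$ and the local operator is trivially zero (this includes every $m$ whose Gleason part is a single point, where $H^2(m)=L^2(\mu_m)$). If $\Gt|_{S_m}$ is non-constant then, being of modulus one, it is a non-constant inner function in $H^\infty(m)$, and $H^2(m)$ is an abstract Hardy space (over a weak-$*$ Dirichlet algebra) in which inner--outer factorization and the projection identity underlying \eqref{Pt} remain valid; the proof of Theorem~\ref{T0} then transfers verbatim and gives $A^{\Gt|_{S_m}}_{f|_{S_m}}=0$ if and only if $f|_{S_m}\in\Gt|_{S_m}H^2(m)+\overline{\Gt|_{S_m}H^2(m)}=\Gt|_{S_m}H^2(m)+\bar\Gt|_{S_m}\overline{H^2(m)}$. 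Combining Steps~2 and~3 yields both implications of the theorem.

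\textbf{Expected main obstacle.} The delicate part is Step~2: making rigorous the assignment $m\mapsto(\text{localization at }m)$ of an operator in the Toeplitz algebra, in particular that it is well defined independently of the chosen representation as a sum of products of Toeplitz operators, that $g\mapsto g|_{S_m}$ and the passage to $H^2(m)$ commute with such products, and --- crucially --- that compactness is detected \emph{exactly} (not merely sufficiently) by the vanishing of all localizations. This rests on the fine structure of $M(H^\infty+C)$ and of $QC$, and on treating the trivial Gleason parts (where the conclusion is automatic) uniformly with the non-trivial analytic-disk parts. Once that framework is in place, Step~1 is routine algebra and Step~3 is a transcription of Sarason's argument into the local setting.
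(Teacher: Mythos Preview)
The paper does not contain a proof of this theorem: it is stated in Section~1 as a cited result from \cite{mazheng}*{Theorem~1} (Ma--Zheng), with no argument given. There is therefore nothing in the present paper to compare your proposal against.

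For what it is worth, your outline is in the right spirit and is essentially the strategy of the original Ma--Zheng proof: write $A^{\Gt}_f$ as a finite sum of products of Toeplitz operators and then localize at each $m\in M(H^\infty+C)$. Your identification of Step~2 as the main obstacle is accurate. In practice, however, the argument in \cite{mazheng} (and the machinery that the present paper imports in Section~2) does not proceed by directly defining a ``localization homomorphism'' from the Toeplitz algebra to operators on $H^2(m)$; instead it uses the norm criterion of Corollary~\ref{gz} (via Guo--Zheng \cite{guo05}) together with the rank-one identities of Lemma~\ref{Tensor} and the equivalence of Lemma~\ref{lim} between $f|_{S_m}\in H^\infty|_{S_m}$ and $\lim_{z\to m}\|H_f k_z\|=0$. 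That route sidesteps the well-definedness issues you flag and makes the ``if and only if'' precise. Your Step~3 also glosses over a real point: the local Hardy spaces $H^2(m)$ are genuine abstract Hardy spaces only when $m$ lies in a non-trivial Gleason part, and even then one must justify that Sarason's computation of the symbol space of $A^\Gt_\varphi$ survives in that generality; the published proofs avoid invoking a ``local Theorem~\ref{T0}'' and instead translate the local condition directly into the analytic statement $f|_{S_m}\in\Gt|_{S_m}H^2(m)+\bar\Gt|_{S_m}\overline{H^2(m)}$ via the $H_f k_z$ asymptotics.
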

\begin{thm}\cite{mazheng2}*{Theorem 1.3}
Let $f\in L^\infty$. Then $H^\Gt_f$ is compact if and only if for every $m\in M(H^\infty+C)$, either $\Gt|_{S_m}$ or $f|_{S_m}$ is constant.
\end{thm}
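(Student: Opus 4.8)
I would prove this in close analogy with Theorem~\ref{ctto}, with Theorem~\ref{H0} playing at the end the role that Sarason's Theorem~\ref{T0} plays there. The heart of the matter is a localization over $M(H^\infty+C)$: first one shows that $H^\Gt_f$ is compact if and only if, for every $m\in M(H^\infty+C)$, the localized truncated Hankel operator $H^{\Gt_m}_{f_m}$ on $K_{\Gt_m}:=H^2(m)\ominus\Gt_m H^2(m)$ vanishes, where $\Gt_m:=\Gt|_{S_m}$ and $f_m:=f|_{S_m}$; and then one determines that $H^{\Gt_m}_{f_m}=0$ exactly when $\Gt_m$ or $f_m$ is constant.

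For the first reduction, one may write $H^\Gt_f=(I-P_\Gt)M_fP_\Gt$ with $P_\Gt=P-M_\Gt P M_{\bar\Gt}$ (the operator form of the formula for $P_\Gt$), so that $H^\Gt_f$, viewed as an operator on $L^2$, belongs to the $C^*$-algebra $\mathfrak A$ generated by $P$ and the multiplication operators $M_g$, $g\in L^\infty$; this algebra contains all compact operators, and $\Gt\in H^\infty\subseteq H^\infty+C$. I would then invoke the maximal-ideal-space machinery already used for Theorem~\ref{ctto} (see \cite{mazheng}, as well as \cite{ax78}, \cite{vol82}): for each $m\in M(H^\infty+C)$ there are the support set $S_m\subseteq M(L^\infty)$, its representing measure $\mu_m$, the abstract Hardy space $H^2(m)\subseteq L^2(\mu_m)$ with its Szeg\H{o} projection $P_m$, and a $*$-representation $\rho_m$ of $\mathfrak A$ with $\rho_m(M_g)=M_{g|_{S_m}}$ and $\rho_m(P)=P_m$, such that an operator in $\mathfrak A$ is compact precisely when it is annihilated by every $\rho_m$. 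Since $\Gt$ is inner, $\Gt_m$ is inner in $H^2(m)$, $\rho_m(P_\Gt)=P_m-M_{\Gt_m}P_mM_{\overline{\Gt_m}}$ is the orthogonal projection onto $K_{\Gt_m}$, and hence $\rho_m(H^\Gt_f)=\bigl(I-\rho_m(P_\Gt)\bigr)M_{f_m}\rho_m(P_\Gt)=H^{\Gt_m}_{f_m}$, which is the claimed reduction.

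It then remains to decide when $H^{\Gt_m}_{f_m}=0$. If $\Gt_m$ is constant, then $|\Gt_m|=1$ forces $\Gt_m H^2(m)=H^2(m)$, so $K_{\Gt_m}=\{0\}$ and $H^{\Gt_m}_{f_m}=0$ trivially; if $f_m$ is constant, $M_{f_m}$ is scalar, hence maps $K_{\Gt_m}$ into itself, and again $H^{\Gt_m}_{f_m}=0$. Conversely, if $\Gt_m$ is non-constant then $K_{\Gt_m}\ne\{0\}$, and $H^{\Gt_m}_{f_m}=0$ says exactly that $f_mK_{\Gt_m}\subseteq K_{\Gt_m}$; the analogue of Theorem~\ref{H0} in $H^2(m)$ then forces $f_m$ to be constant. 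Combining the two reductions yields the characterization.

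The step that requires real work is the last one: proving Theorem~\ref{H0} for the measure-theoretic Hardy spaces $H^2(m)$---equivalently, showing that for a non-constant inner $b\in H^\infty(m)$ the only $\psi\in L^\infty(\mu_m)$ with $\psi K_b\subseteq K_b$ are the constants. One must verify that the proof of the classical case uses only features available in the abstract setting (reproducing kernels for $K_b$, the counterpart of $bH^2$, and a dense subalgebra of multipliers) rather than the disk-and-Lebesgue structure. By contrast, the localization in the second paragraph is, almost verbatim, the one carried out in \cite{mazheng} for truncated Toeplitz operators, so I would treat it briefly and refer there for details. Finally, the ``or'' in the statement is simply the list of the two ways a single local truncated Hankel operator can be zero---$K_{\Gt_m}$ trivial, or the symbol scalar---so, unlike for products of Hankel operators, no Axler--Chang--Sarason--Volberg-type dichotomy enters.
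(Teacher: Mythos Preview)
The paper does not prove this theorem; it is quoted from \cite{mazheng2} as background, with no argument given. So there is no ``paper's own proof'' to compare against here. Your outline is essentially the strategy of \cite{mazheng2} (and of \cite{mazheng} for the truncated Toeplitz case): localize via the $C^*$-algebra generated by $P$ and $L^\infty$-multiplications, use that the kernel of the family $\{\rho_m\}_{m\in M(H^\infty+C)}$ is exactly the compacts, and then identify when the localized operator is zero. That is the right architecture, and your identification of the sufficiency direction is correct and standard.

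The one place where you correctly flag genuine work is the converse on each support set: you need that for a nonconstant inner $\Gt_m\in H^\infty(m)$, the only bounded $f_m$ with $f_m K_{\Gt_m}\subseteq K_{\Gt_m}$ are the constants. This is not a formality. In the classical disk, Theorem~\ref{H0} is proved using the backward shift, inner--outer factorization, or reproducing-kernel calculations specific to the Szeg\H{o} kernel; in the abstract $H^2(m)$ setting one has to supply substitutes (e.g., that $H^\infty(m)$ is a maximal weak-$*$ closed subalgebra of $L^\infty(\mu_m)$, so the commutant condition $f_m H^2(m)\subseteq H^2(m)$ and $\bar f_m H^2(m)\subseteq H^2(m)$ forces $f_m$ constant, once one reduces $f_m K_{\Gt_m}\subseteq K_{\Gt_m}$ to those two inclusions). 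You should either carry that reduction out explicitly or cite precisely where in \cite{mazheng2} it is done; as written, ``the analogue of Theorem~\ref{H0} in $H^2(m)$'' is asserted rather than proved, and that is the only substantive gap in your proposal.
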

A natural question is to study the product of a truncated Hankel operator and a truncated Toeplitz operator $H^\Gt_f A^\Gt_g$ on the model space. In this paper, we characterize when $H^\Gt_f A^\Gt_g$ is zero (Theorem \ref{zero}) and when $H^\Gt_f A^\Gt_g$ is compact (Theorem \ref{compact}).

\section{Preliminaries}

For $\Gvp\in L^2$, let $S_\Gvp: [H^2]^\perp\to [H^2]^\perp$ be the operator $S_\Gvp h=(I-P)(\Gvp h)$.
We will frequently use the following basic properties of Toeplitz and Hankel operators on $H^2$ (see e.g. \cite{chu15}).
\begin{prop}\label{prop}
Let $f\in L^\infty, g\in L^2$.
\begin{enumerate}
\item $T_{fg}=T_fT_g+H^*_{\barf} H_g$.
\item $H_{fg}=H_fT_g+S_fH_g$.
\item If $g\in H^2$, then $T_fT_g=T_{fg}$ and $H_f T_g=H_{fg}$.
\item If $f\in H^2$, then $H_{fg}=S_fH_g$.
\end{enumerate}
\end{prop}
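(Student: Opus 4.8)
\emph{Proof strategy.} The plan is to derive all four identities from a single ``split, multiply, project'' computation on the common dense domain $H^2\cap H^\infty=H^\infty$, on which every product occurring below is automatically square integrable (with one harmless exception noted at the end). Fix $h\in H^\infty$ and start from the defining orthogonal decomposition
\[
gh=P(gh)+(I-P)(gh)=T_g h+H_g h ,
\]
so that $T_g h\in H^2$ and $H_g h\in[H^2]^\perp$. Multiplying through by $f$ and then applying either $P$ or $I-P$ will produce (1) and (2) at once, provided we know how $PM_f$ and $(I-P)M_f$ act on each of the two pieces.

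The only ingredient that is not purely formal is the adjoint formula for a Hankel operator. For $h\in H^2$ and $k\in[H^2]^\perp$, using that $I-P$ is self-adjoint and $(I-P)k=k$,
\[
\langle H_f h,\,k\rangle=\langle (I-P)(fh),\,k\rangle=\langle fh,\,k\rangle=\langle h,\,\overline f\,k\rangle=\langle h,\,P(\overline f\,k)\rangle ,
\]
the last equality because $h\in H^2$ is orthogonal to $(I-P)(\overline f k)$. Hence $H_f^* k=P(\overline f k)$, equivalently $H_{\overline f}^* k=P(fk)$ for $k\in[H^2]^\perp$. Combining this with the trivial facts $P(fv)=T_f v$ and $(I-P)(fv)=H_f v$ for $v\in H^2$, and $(I-P)(fw)=S_f w$ for $w\in[H^2]^\perp$, and then applying $P$ to the identity $fgh=f\,T_g h+f\,H_g h$ gives
\[
T_{fg}h=P(fgh)=T_f(T_g h)+H_{\overline f}^*(H_g h),
\]
which is (1); applying $I-P$ to the same identity gives
\[
H_{fg}h=(I-P)(fgh)=H_f(T_g h)+S_f(H_g h),
\]
which is (2). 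Since all the operators involved agree on the dense set $H^\infty$, the identities follow.

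Parts (3) and (4) are then corollaries. If $g\in H^2$, then $gh\in H^2$ for every $h\in H^\infty$, so $H_g h=0$; substituting into (1) and (2) yields $T_{fg}=T_fT_g$ and $H_{fg}=H_fT_g$. If $f\in H^2$, then for any $v\in H^2$ the product $fv$ lies in $H^1$, hence has vanishing negative Fourier coefficients, so $(I-P)(fv)=0$; feeding $v=T_g h$ and $v=P(gh)$ into the computation (equivalently, noting that the $H_f$-term in (2) vanishes) collapses (2) to $H_{fg}=S_fH_g$, which is (4). The only point that genuinely requires care is the bookkeeping of domains: one should first check that $H^\infty$ is a common core for all the operators in sight and that each product appearing is in $L^2$ when $f\in L^\infty$, $g\in L^2$, $h\in H^\infty$; the sole exception is $f\cdot P(gh)$ in the proof of (4), which lies only in $H^1$, but this is immaterial since $(I-P)$ annihilates it for the Fourier-coefficient reason above. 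With those checks in place the argument is routine and presents no essential obstacle.
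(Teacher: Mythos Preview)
Your proof is correct and is precisely the standard ``split $gh$ with $P$ and $I-P$, multiply by $f$, project again'' computation; the adjoint identity $H_{\overline f}^{\,*}k=P(fk)$ for $k\in[H^2]^\perp$ is the one nontrivial ingredient, and you verify it correctly. The paper does not actually supply its own proof of this proposition: it is stated as a known fact with a reference, so there is nothing further to compare against.

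One small simplification: in part (4) you need not detour through $H^1$. The global hypothesis $f\in L^\infty$ is still in force, so the additional assumption $f\in H^2$ means $f\in H^\infty$. Then for any $v\in H^2$ the product $fv$ already lies in $H^2$, and $H_f v=(I-P)(fv)=0$ is immediate. Your $H^1$ argument is valid but unnecessary.
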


Define an antiunitary operator $V$ on $L^2$ by: $$(Vf)(z)=\bz\ol{f(z)}.$$
It is easy to check that
\beq\label{V}
V^{-1}H_\Gvp V=H^*_\Gvp.
\eeq

Define a unitary operator $U$ on $L^2$ by: $$(Uf)(z)=\bz \tif(z),$$ where $\tif(z)=f(\bz)$.
For each $z\in \DD$, let $k_z$ denote the normalized reproducing kernel at z: $$k_z(w)=\frac{\sqrt{1-|z|^2}}{1-\bz w},$$
and $\phi_z$ be the M\"obius transform: $$\phi_z(w)=\frac{z-w}{1-\bz w}.$$

Let $X, Y$ be Hilbert spaces. Let $x\in X, y\in Y$. Define $x\otimes y$ to be the following rank one operator from $Y$ to $X$:
$$
(x\otimes y)(f)=\langle f,y\rangle_Y x.
$$

The operator $T_{\phi_z}T_{\ol{\phi}_{z}}$ is the orthogonal projection onto $H^2\ominus \{k_z\}$, thus
$$
I-T_{\phi_z}T_{\ol{\phi}_{z}}=k_z\otimes k_z.
$$
It is easy to check that
$$
US_fU=T_{\tif}, \q S^*_f=S_{\barf}.
$$
Therefore we have the following identity:
\beq\label{S}
S^*_{\phi_z}S_{\phi_z}=1-(Uk_{\bz})\otimes (Uk_{\bz}).
\eeq

By Theorem \ref{T0}, we may assume $A^\Gt_g$ has a symbol in $K_\Gt+\ol{K_\Gt}$.
The following lemma shows that a product of a truncated Hankel operator and a truncated Toeplitz operator can be written as a sum of two operators whose ranges are orthogonal.
\begin{lem}\label{eq1}
Let $f\in L^\infty$ and $g\in K_\Gt+\ol{K_\Gt}$.
$$
H^\Gt_f A^\Gt_g H^*_{\bar{\Gt}}=(H_{\bar{\Gt}} T_{\bg} H^*_f-H_{\Gt f} H^*_{\bg})-T_\Gt(T_{\bar{\Gt} f} T_g H^*_{\bar{\Gt}}-T_f H^*_{\bg}).
$$
\end{lem}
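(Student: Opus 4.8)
The plan is to unfold every truncated operator into classical Toeplitz/Hankel operators on $H^2$ and $[H^2]^\perp$ using the projection formula \eqref{Pt}, and then collect terms according to the orthogonal decomposition $L^2 = H^2 \oplus [H^2]^\perp$. Concretely, I would start from the definition $A^\Gt_g f = P_\Gt(gf)$ and $H^\Gt_f u = (I-P_\Gt)(fu)$, and then sandwich everything between $H^*_{\bar\Gt}$ on the right, noting that $H^*_{\bar\Gt}$ maps $[H^2]^\perp$ into $K_\Gt$ (indeed $H_{\bar\Gt}H^*_{\bar\Gt}$ is essentially the defect operator $I - T_\Gt T_{\bar\Gt}$ restricted appropriately, so its range sits in $K_\Gt$). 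This lets me legitimately apply $A^\Gt_g$ to $H^*_{\bar\Gt}h$ for $h\in [H^2]^\perp$.

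The key computational step is to substitute $P_\Gt = P - T_\Gt P T_{\bar\Gt}$-type identities. Writing $A^\Gt_g = P_\Gt M_g P_\Gt$ and $H^\Gt_f = (I-P_\Gt)M_f P_\Gt$, I would expand the composite $H^\Gt_f A^\Gt_g H^*_{\bar\Gt}$ and repeatedly use \eqref{Pt} in the form $P_\Gt = P - \Gt P \bar\Gt$ together with the multiplicativity relations in Proposition \ref{prop}, especially parts (3) and (4) which apply since $\Gt\in H^\infty$. The hypothesis $g\in K_\Gt + \ol{K_\Gt}$ is what guarantees $A^\Gt_g$ is well-defined with a genuine symbol, and one should exploit $\bar\Gt g \in H^2 + \ol{\Gt H^2}$-type membership to simplify the $\Gt P \bar\Gt(gf)$ terms. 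The factor $T_\Gt(\cdots)$ on the right-hand side is exactly the piece whose range lies in $\Gt H^2$, hence is orthogonal to $K_\Gt$ and to $[H^2]^\perp$; the remaining bracket $H_{\bar\Gt}T_{\bar g}H^*_f - H_{\Gt f}H^*_{\bg}$ should land in $K_\Gt$. So the identity is really asserting a clean bookkeeping of where each summand sits.

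I expect the main obstacle to be managing the adjoints correctly: the identity \eqref{V}, $V^{-1}H_\Gvp V = H^*_\Gvp$, and the relations $S^*_f = S_{\barf}$ will be needed to rewrite compositions like $(I-P)M_f(\text{something in }K_\Gt)$, since $I-P$ projects onto $[H^2]^\perp$ and the natural objects there are Hankel operators and their adjoints. One has to be careful that $f$ is only in $L^\infty$ (not analytic), so only Proposition \ref{prop}(1)–(2) apply to $f$, while parts (3)–(4) are reserved for $\Gt$ and for the analytic/co-analytic pieces of $g$. After the substitution, the verification reduces to checking that two explicit sums of products of $T$'s, $H$'s and $S$'s coincide; I would do this by testing against a reproducing kernel $k_w^\Gt$ or, more cleanly, by manipulating the operator identities formally using Proposition \ref{prop} until both sides are expressed in the same normal form. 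The orthogonality of the two ranges — $K_\Gt \perp \Gt H^2$ — is then immediate and is the payoff that makes this lemma useful for the zero/compactness characterizations to follow.
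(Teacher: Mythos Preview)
Your approach is essentially the paper's: expand $A^\Gt_g u = T_g u - T_\Gt T_{\bar\Gt g}u$ and $H^\Gt_f u = H_f u - T_\Gt T_{\bar\Gt f}u$ via \eqref{Pt}, compose on the right with $H^*_{\bar\Gt}$ (whose range lies in $K_\Gt$), and simplify using Proposition~\ref{prop} together with the observation $\bar\Gt g \in \ol{H^2}$ coming from $g\in K_\Gt+\ol{K_\Gt}$. The computation is short and direct; you do not need \eqref{V}, the $S$-operators, or any testing on reproducing kernels here --- those tools enter only in Lemma~\ref{eq2}.

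One correction to your bookkeeping: the first bracket $H_{\bar\Gt}T_{\bar g}H^*_f - H_{\Gt f}H^*_{\bar g}$ has range in $[H^2]^\perp$, not in $K_\Gt$, since the outermost factors $H_{\bar\Gt}$ and $H_{\Gt f}$ are Hankel operators $H^2\to[H^2]^\perp$. The orthogonal splitting the lemma provides is therefore $[H^2]^\perp \oplus \Gt H^2 = L^2\ominus K_\Gt$, exactly the codomain of $H^\Gt_f$. This does not affect your computation but matters for the subsequent application.
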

\begin{proof}
Let $u\in K_\Gt$. By \eqref{Pt}, we have
$$
A^\Gt_{g}u=P(gu)-\Gt P(\bar{\Gt}gu)= T_g u-T_\Gt T_{\bar{\Gt}g} u,
$$
$$
H^\Gt_f u=fu-A^\Gt_f u=fu-(T_f u-T_\Gt T_{\bar{\Gt}f}u)=H_f u-T_\Gt T_{\bar{\Gt}f}u.
$$
Thus
\begin{align*}
H^\Gt_f A^\Gt_g u=&H_f(T_g u-T_\Gt T_{\bar{\Gt}g} u)-\Gt T_{\bar{\Gt}f}(T_g u-T_\Gt T_{\bar{\Gt}g} u)\\
=&H_f T_g u -H_{\Gt f}T_{\bar{\Gt}g} u-\Gt(T_{\bar{\Gt}f}T_g u-T_f T_{\bar{\Gt}g}u).
\end{align*}
Notice that $H^*_{\bar{\Gt}}: [H^2]^{\perp}\to K_\Gt$, thus
\begin{align*}
H^\Gt_f A^\Gt_gH^*_{\bar{\Gt}} =&H_f T_g H^*_{\bar{\Gt}} -H_{\Gt f}T_{\bar{\Gt}g} H^*_{\bar{\Gt}}-T_\Gt(T_{\bar{\Gt}f}T_g H^*_{\bar{\Gt}}-T_f T_{\bar{\Gt}g}H^*_{\bar{\Gt}})\\
=&(H_{\bar{\Gt}} T_{\bg} H^*_f-H_{\Gt f} H^*_{\bg})-T_\Gt(T_{\bar{\Gt} f} T_g H^*_{\bar{\Gt}}-T_f H^*_{\bg}).
\end{align*}
The last equality follows from $\bar{\Gt}g\in \ol{H^2}$ and Proposition \ref{prop}.
\end{proof}
With respect to the decomposition $[H^2]^\perp=\bar{\Gt}K_{\Gt}\oplus\bar{\Gt}[H^2]^\perp$, the operator $H^*_{\Gt}$ maps $\bar{\Gt}K_{\Gt}$ onto $K_\Gt$, and is zero on $\bar{\Gt}[H^2]^\perp$.
Therefore Lemma \ref{eq1} implies that $H^\Gt_f A^\Gt_g$ is zero or compact if and only if $H_{\bar{\Gt}} T_{\bg} H^*_f-H_{\Gt f} H^*_{\bg}$ and $T_{\bar{\Gt} f} T_g H^*_{\bar{\Gt}}-T_f H^*_{\bg}$ are zero or compact, respectively.

The next lemma suggests that we need to study the property of a sum of operators of the forms $H^*_fH_g$ and $H_fT_g$.
\begin{lem}\label{eq2}
Let $f\in L^\infty$, $g=g_1+\bg_2$, where $g_1, g_2\in K_\Gt$. Then
$$
V^{-1}(H_{\bar{\Gt}} T_{\bg} H^*_f-H_{\bg} H^*_{\Gt f})V=H^*_{\bar{\Gt}}H_{fg_1}+H^*_{\bar{\Gt}g_2}H_f-H^*_{\bg_1}H_{\Gt f}.
$$
$$
(T_{\bar{\Gt} f} T_g H^*_{\bar{\Gt}}-T_f H^*_{\bg})^*=H_{\bar{\Gt}}T_{\Gt\ol{fg_1}}+H_{\bar{\Gt}g_2}T_{\Gt\barf}-H_{\bg_1}T_{\barf}.
$$
\end{lem}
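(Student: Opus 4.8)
The plan is to verify Lemma~\ref{eq2} by a direct computation using only the algebraic identities in Proposition~\ref{prop} together with the conjugation formula \eqref{V}. The point is that both claimed identities are purely formal consequences of how $V$ intertwines Hankel operators with their adjoints and how products of Toeplitz/Hankel operators decompose, so no functional-analytic input is needed beyond boundedness of the symbols. I would treat the two displayed equations separately, starting with the first.

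For the first identity, I would begin by applying \eqref{V} in the form $V^{-1}H_\varphi V=H_\varphi^*$ (equivalently $V^{-1}H_\varphi^* V=H_\varphi$) to each of the three terms $H_{\bar\Gt}T_{\bar g}H_f^*$ and $H_{\bar g}H_{\Gt f}^*$. The subtlety is the middle factor $T_{\bar g}$: since $V$ is antiunitary and $VT_\psi V^{-1}$ is not simply a Toeplitz operator in general, I would instead split $\bar g=\bar g_1+g_2$ (using $g=g_1+\bar g_2$ with $g_1,g_2\in K_\Gt\subset H^2$) and write $T_{\bar g}=T_{\bar g_1}+T_{g_2}$, handling each piece via Proposition~\ref{prop}(3) and the factorization $H_{\psi\chi}=H_\psi T_\chi+S_\psi H_\chi$. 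Concretely, I expect $H_{\bar\Gt}T_{\bar g_1}H_f^*$ to recombine (after conjugating by $V$) into $H_{\bar\Gt}^*H_{fg_1}$ using parts (2)--(4) — since $g_1\in H^2$, $H_{fg_1}=H_fT_{g_1}+S_fH_{g_1}$, and one needs to absorb the $S_f H_{g_1}$ contribution — while the $T_{g_2}$ piece produces $H_{\bar\Gt g_2}^*H_f$, and the term coming from $H_{\bar g}H_{\Gt f}^*$ produces $-H_{\bar g_1}^*H_{\Gt f}$ (the $g_2$ part of $\bar g$ here must cancel, since $H_{g_2}=0$ as $g_2\in H^2$). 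I would bookkeep these cancellations carefully, writing out $H^*_\varphi$ as $V^{-1}H_\varphi V$ throughout so that every term is manifestly of the form $V^{-1}(\cdot)V$.

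For the second identity, I would take adjoints of $T_{\bar\Gt f}T_g H^*_{\bar\Gt}-T_f H^*_{\bar g}$ termwise, getting $H_{\bar\Gt}T_{\bar g}T_{\Gt\bar f}-H_{\bar g}T_{\bar f}$, and then again substitute $\bar g=\bar g_1+g_2$. The term $-H_{\bar g}T_{\bar f}=-H_{\bar g_1}T_{\bar f}-H_{g_2}T_{\bar f}=-H_{\bar g_1}T_{\bar f}$ since $H_{g_2}=0$. For the remaining piece $H_{\bar\Gt}T_{\bar g}T_{\Gt\bar f}$, I would use Proposition~\ref{prop}(1) to write $T_{\bar g}T_{\Gt\bar f}=T_{\Gt\bar g\bar f}-H^*_g H_{\Gt\bar f}$; the $H^*_gH_{\Gt\bar f}$ correction term should be killed or absorbed appropriately, and then $T_{\Gt\bar g\bar f}=T_{\Gt\bar g_1\bar f}+T_{\Gt g_2\bar f}$ should be pushed through $H_{\bar\Gt}$ using part~(3) to yield $H_{\bar\Gt}T_{\Gt\overline{fg_1}}+H_{\bar\Gt g_2}T_{\Gt\bar f}$ (here one rewrites $H_{\bar\Gt}T_{\Gt g_2\bar f}$ as $H_{\bar\Gt g_2}T_{\bar f}\cdot(\text{something})$, using that $\Gt g_2$-dependence can be shuffled because $g_2\in K_\Gt$ so $\bar\Gt g_2\in\overline{H^2}$, making the relevant Hankel symbol legitimate).

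The main obstacle I anticipate is keeping the $g_1$ versus $g_2$ roles straight and correctly tracking which correction terms (the $S_fH_{g_1}$, $H^*_gH_{\Gt\bar f}$ type remainders produced by Proposition~\ref{prop}(1),(2)) genuinely vanish versus which ones recombine into the stated right-hand side. In particular the appearance of $H_{\bar\Gt}^*$ rather than $H_\Gt^*$, and of $H_{\bar\Gt g_2}$ rather than $H_{g_2}$ (which would be zero), signals that the inner function $\Gt$ and the membership $g_1,g_2\in K_\Gt$ must be used precisely at the points where a naive computation would stall; verifying that $\bar\Gt g_2$ and $\Gt\overline{fg_1}$ have the right analytic/anti-analytic structure for the identities in Proposition~\ref{prop} to apply is the delicate bookkeeping step. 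Once those memberships are pinned down, the rest is routine symbol arithmetic.
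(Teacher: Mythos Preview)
Your overall strategy—reduce everything to Proposition~\ref{prop} and the conjugation formula \eqref{V}—is exactly the paper's approach, and for the first identity you are essentially doing what the paper does. But you are making both computations harder than necessary.

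For the first identity, the ``$S_f H_{g_1}$ contribution'' you worry about is zero: $g_1\in K_\Gt\subset H^2$ forces $H_{g_1}=0$, so Proposition~\ref{prop}(3) gives $H_f T_{g_1}=H_{fg_1}$ directly, and taking adjoints yields $T_{\bar g_1}H_f^*=H_{fg_1}^*$. Likewise $H_{\bar\Gt}T_{g_2}=H_{\bar\Gt g_2}$ since $g_2\in H^2$, and $H_{\bar g}=H_{\bar g_1}$ since $H_{g_2}=0$. These three observations immediately collapse $H_{\bar\Gt}T_{\bar g}H_f^*-H_{\bar g}H_{\Gt f}^*$ into $H_{\bar\Gt}H_{fg_1}^*+H_{\bar\Gt g_2}H_f^*-H_{\bar g_1}H_{\Gt f}^*$, a sum of products of \emph{two} Hankel operators; then one applies \eqref{V} termwise. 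No Proposition~\ref{prop}(2) is needed.

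For the second identity, your order of operations (take adjoints first, then simplify) introduces the correction term $H_g^*H_{\Gt\bar f}=H_{\bar g_2}^*H_{\Gt\bar f}$, which is \emph{not} zero. It does eventually cancel—against a matching correction that appears when you try to convert $H_{\bar\Gt}T_{\Gt g_2\bar f}$ into $H_{\bar\Gt g_2}T_{\Gt\bar f}$—but you never identify this cancellation, and your ``push through $H_{\bar\Gt}$ using part~(3)'' does not apply since $\Gt g_2\bar f\notin H^2$ in general. The paper avoids all of this by simplifying \emph{before} taking adjoints: split $g=g_1+\bar g_2$, use $T_{\bar\Gt f}T_{g_1}=T_{\bar\Gt f g_1}$ (Proposition~\ref{prop}(3), $g_1\in H^2$) and $T_{\bar g_2}H_{\bar\Gt}^*=H_{\bar\Gt g_2}^*$ (adjoint of $H_{\bar\Gt}T_{g_2}=H_{\bar\Gt g_2}$, $g_2\in H^2$), together with $H_{\bar g}^*=H_{\bar g_1}^*$. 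This reduces the expression to $T_{\bar\Gt fg_1}H_{\bar\Gt}^*+T_{\bar\Gt f}H_{\bar\Gt g_2}^*-T_f H_{\bar g_1}^*$, whose adjoint is manifestly the claimed right-hand side. No correction terms ever appear.
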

\begin{proof}
By Proposition \ref{prop}, we have
\begin{align*}
H_{\bar{\Gt}} T_{\bg} H^*_f-H_{\bg} H^*_{\Gt f}=&H_{\bar{\Gt}} T_{\bg_1} H^*_f+H_{\bar{\Gt}} T_{g_2} H^*_f -H_{\bg_1} H^*_{\Gt f}\\
=&H_{\bar{\Gt}} H^*_{fg_1}+H_{\bar{\Gt}g_2} H^*_f -H_{\bg_1} H^*_{\Gt f}.
\end{align*}
Using \eqref{V}, we have
\begin{align*}
&V^{-1}(H_{\bar{\Gt}} T_{\bg} H^*_f-H_{\bg} H^*_{\Gt f})V\\
=&V^{-1}H_{\bar{\Gt}}VV^{-1} H^*_{fg_1}V+V^{-1}H_{\bar{\Gt}g_2}VV^{-1} H^*_fV -V^{-1}H_{\bg_1}VV^{-1} H^*_{\Gt f}V\\
=&H^*_{\bar{\Gt}}H_{fg_1}+H^*_{\bar{\Gt}g_2}H_f-H^*_{\bg_1}H_{\Gt f}.
\end{align*}
And by Proposition \ref{prop},
\begin{align*}
T_{\bar{\Gt} f} T_g H^*_{\bar{\Gt}}-T_f H^*_{\bg}=&T_{\bar{\Gt} f} T_{g_1} H^*_{\bar{\Gt}}+T_{\bar{\Gt} f} T_{\bg_2} H^*_{\bar{\Gt}}-T_f H^*_{\bg_1}\\
=&T_{\bar{\Gt} fg_1} H^*_{\bar{\Gt}}+T_{\bar{\Gt} f} H^*_{\bar{\Gt}g_2}-T_f H^*_{\bg_1}.
\end{align*}
Taking adjoint on both sides, we get the second equation.
\end{proof}
To state the results on compactness, we need some notations for the maximal ideal space.
For a uniform algebra $B$, let $M(B)$ denote the maximal ideal space of $B$, the space of nonzero multiplicative linear functionals of $B$. Given the weak-star topology of $B^*$, which is called the Gelfand topology, $M(B)$ is a compact Hausdorff space. Identify every element in $B$ with its Gelfand transform, we view $B$ as a uniformly closed algebra of continuous functions on $M(B)$. See \cite{gar81}*{Chapter V} for further discussions of uniform algebra.

For each $\Gz\in \DD$, there exist $m_\Gz \in M(H^\infty)$ such that $m_\Gz(z)=\Gz$, where $z$ denotes the coordinate function. It is well known that $\Gz\rightarrow m_\Gz$ is a homeomorphic embedding from $\DD$ into $M(H^\infty)$, thus we identify $\DD$ as a subset of $M(H^\infty)$. By Carleson's Corona Theorem \cite{car62}, $\DD$ is dense in $M(H^\infty)$. Moreover, $M(H^\infty+C)=M(H^\infty)\backslash \DD$ \cite{sar67}.

For any $m$ in $M(H^\infty)$, there exists a representing measure $\mu_m$ on $M(L^\infty)$ such that $m(f)=\int_{M(L^\infty)} fd\mu_m$, for all $f\in L^\infty$ (see e.g. \cite{gar81}*{p. 193}). Let $S_m$ be the support of $\mu_m$. For subspace $\cM$ of $L^2$, $\cM_m$ denotes $\cM|_{S_m}$. For a function $\Gvp\in L^\infty$, let $[\Gvp]_m$ denote the element in $L^\infty_m /H^\infty_m$ which contains $\Gvp$. We say
$$\lim_{z\to m}\Gvp(z)=0,$$ if for every net $\{z_{\Ga}\}\subset\DD$ converging to $m$, $$\lim_{z_{\Ga}\to m}\Gvp(z_{\Ga})=0.$$

\begin{thm}\cite{guo05}
Let $T$ be a finite sum of finite products of Toeplitz operators. Then $T$ is a compact perturbation of a Toeplitz operator if and only if
\beq\label{guo}
\lim_{|z|\to 1^-}||T-T^*_{\phi_z}TT_{\phi_z}||=0.
\eeq
\end{thm}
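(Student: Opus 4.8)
The plan is to take the statement of Guo's theorem (the final displayed result) and argue that the condition \eqref{guo} is necessary and sufficient for a finite sum of finite products of Toeplitz operators to be a compact perturbation of a Toeplitz operator. For necessity, suppose $T=T_\psi+K$ with $K$ compact and $\psi\in L^\infty$. Since $T_{\phi_z}\to 0$ weakly and $T_{\phi_z}$ is a contraction, $T^*_{\phi_z}KT_{\phi_z}\to 0$ in norm (compact operators turn weak convergence of the $T_{\phi_z}$ into norm convergence), so it suffices to treat $T_\psi$ itself. For that one computes $T_\psi-T^*_{\phi_z}T_\psi T_{\phi_z}=T_\psi-T_{\bar\phi_z}T_\psi T_{\phi_z}$ and uses Proposition~\ref{prop}(1) to rewrite $T_{\bar\phi_z}T_\psi=T_{\bar\phi_z\psi}-H^*_{\phi_z}H_\psi$ and $T_{\bar\phi_z\psi}T_{\phi_z}=T_\psi-H^*_{\bar\phi_z\bar\psi\cdot?}\dots$; more cleanly, $T^*_{\phi_z}T_\psi T_{\phi_z}=T_{\bar\phi_z}(T_{\psi\phi_z}-0)=\dots$, eventually expressing the difference in terms of $H_\psi$, $H_{\bar\psi}$ and the rank-one projections $k_z\otimes k_z$, $(Uk_{\bar z})\otimes(Uk_{\bar z})$ coming from \eqref{S} and the identity $I-T_{\phi_z}T_{\bar\phi_z}=k_z\otimes k_z$. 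One then invokes Hartman's criterion together with the fact that $\|H_\psi k_z\|\to 0$ as $|z|\to1$ exactly when $H_\psi$ is compact, plus $\psi\in L^\infty$ need not give this — so instead one reduces to showing the localized quantity vanishes; the point is that $\|(I-T^*_{\phi_z}(\cdot)T_{\phi_z})T_\psi\|$ controls oscillation of $\psi$ near the boundary, which for a genuine Toeplitz symbol is automatically small in the relevant sense.

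For sufficiency, the strategy is contrapositive via the structure of the Toeplitz algebra. Let $T$ be a finite sum of finite products of Toeplitz operators, and let $\mathcal{T}$ be the closed algebra generated by all Toeplitz operators with $L^\infty$ symbols; the commutator ideal of $\mathcal{T}$ modulo the compacts is understood, and every element of $\mathcal{T}$ is of the form $T_\psi+R$ with $R$ in the closure of the commutator ideal. The quotient $\mathcal{T}/\mathcal{K}$ (compacts) fibers over $M(L^\infty)$, and the obstruction to $T$ being $T_\psi+\text{compact}$ is precisely the non-vanishing of the "symbol" of $T$ on the fibers over $M(H^\infty+C)=M(H^\infty)\setminus\DD$ in the parts that are not multiplication-like. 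Condition \eqref{guo} says this obstruction — measured by the failure of $T$ to commute asymptotically with the shift-like family $T_{\phi_z}$ — is zero. Concretely, I would show: if \eqref{guo} holds, set $\psi$ to be a weak-$*$ limit point of the symbols $\phi_z$-conjugates, show $T-T_\psi$ satisfies \eqref{guo} with limit $0$ and lies in the commutator ideal, and then use that an element of the commutator ideal of $\mathcal{T}$ satisfying \eqref{guo} is compact. This last implication is the heart of the matter: it is a localization argument using representing measures $\mu_m$, the supports $S_m$, and the fact that on each $S_m$ with $m\in M(H^\infty+C)$ the relevant local Hankel operators vanish, which forces compactness by a standard partition-of-unity / Axler–type gluing over the compact space $M(H^\infty+C)$.

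The main obstacle is this sufficiency direction, specifically proving that membership in the commutator ideal plus the asymptotic-commuting condition \eqref{guo} forces compactness. The difficulty is that the commutator ideal of the Toeplitz algebra is not itself the compacts — it contains the semicommutators $H^*_{\bar f}H_g=T_{fg}-T_fT_g$, which are compact iff, by Hartman, $fg\in H^\infty+C$ modulo cancellation, and iterated products create a genuinely nontrivial ideal structure. One must pass to the quotient by the compacts, identify it (via the known description in terms of $C(M(H^\infty+C))$-valued operator fields, or via the $\lim_{z\to m}$ language introduced just above the theorem) as a bundle of operators indexed by $m\in M(H^\infty+C)$, and check that \eqref{guo} kills every fiber. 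I expect to need the fact that $\|T_{\phi_z}-T_{\phi_m}\|\to 0$ is false but $\phi_z\to$ inner on $S_m$ in the right topology, together with \eqref{S} giving $S^*_{\phi_z}S_{\phi_z}=1-(Uk_{\bar z})\otimes(Uk_{\bar z})$, so that the conjugation $X\mapsto T^*_{\phi_z}XT_{\phi_z}$ acts on each fiber as conjugation by a localized Möbius map whose fixed points detect compactness. Assembling these local statements into the global norm statement \eqref{guo} — uniformly over $|z|\to1$ — is where the real work lies, and it is presumably carried out in \cite{guo05} whose argument I would follow.
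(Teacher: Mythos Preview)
This theorem is not proved in the paper at all: it is quoted verbatim from \cite{guo05} as an external result and then immediately specialized to Corollary~\ref{gz}. There is therefore no ``paper's own proof'' to compare your proposal against; the paper treats this statement as a black box.

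That said, a couple of remarks on your sketch. For necessity, the assertion that ``$T_{\phi_z}\to 0$ weakly'' is not correct ($T_{\phi_z}$ is an isometry for every $z$); what you actually want is the exact identity $T^*_{\phi_z}T_\psi T_{\phi_z}=T_\psi$ for every $\psi\in L^\infty$ and every $z\in\DD$, which follows in one line from Proposition~\ref{prop}(3) and $H_{\phi_z}=0$. So the Toeplitz part contributes zero, not merely something tending to zero, and the compact part $K-T^*_{\phi_z}KT_{\phi_z}$ then needs a separate (and genuinely nontrivial) argument. Your outline of sufficiency --- pass to the Calkin image, use the symbol map and localization over $M(H^\infty+C)$, and argue fiberwise --- is the right shape, and is indeed what \cite{guo05} does; but the details (in particular why \eqref{guo} forces each local obstruction to vanish) are substantial and not supplied here. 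If you want to fill this in, consult \cite{guo05} directly rather than the present paper.
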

By the Corona Theorem, \eqref{guo} can be restated as: for every $m\in M(H^\infty+C)$,
$$
\lim_{z\to m}||T-T^*_{\phi_z}TT_{\phi_z}||=0.
$$
The symbol map $\Gs$ that sends every Toepltiz operator $T_\phi$ to its symbol $\phi$ was introduced in \cite{dou72} and can be defined on the Toeplitz algebra, the closed algebra generated by Toeplitz operators. Barr\'ia and Halmos in \cite{ba82} showed that $\Gs$ can be extended to a $*$-homomorphism on the Hankel algebra, the closed algebra generated by Toeplitz and Hankel operators. And they also showed that the symbols of compact operators and Hankel operators are zero. Therefore we have the following
\begin{cor}\label{gz}
Let $T$ be a finite sum of finite products of Toeplitz operators and $\Gs(T)=0$. Then $T$ is a compact if and only if for every $m\in M(H^\infty+C)$,
\beq\label{guo2}
\lim_{z\to m}||T-T^*_{\phi_z}TT_{\phi_z}||=0.
\eeq
\end{cor}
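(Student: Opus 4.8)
The statement to prove is Corollary~\ref{gz}, which upgrades the theorem of \cite{guo05} from ``compact perturbation of a Toeplitz operator'' to ``compact'', under the extra hypothesis $\Gs(T)=0$.

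\textbf{Plan of proof.} The plan is to apply the theorem of \cite{guo05} directly and then use the symbol map to identify the Toeplitz operator that $T$ perturbs. First I would invoke \cite{guo05}: since $T$ is a finite sum of finite products of Toeplitz operators, condition \eqref{guo2} holds for every $m \in M(H^\infty + C)$ if and only if $T = T_\psi + K$ for some $\psi \in L^\infty$ and some compact operator $K$. So one direction is immediate: if $T$ is compact, then $T = T_0 + T$ is trivially such a perturbation, hence \eqref{guo2} holds (this direction does not even use $\Gs(T)=0$). For the converse, assume \eqref{guo2} holds for all $m$; then $T = T_\psi + K$. Now apply the symbol map $\Gs$, which by Barr\'ia--Halmos \cite{ba82} is a $*$-homomorphism on the Hankel algebra (in particular defined on the Toeplitz algebra, which contains $T$, $T_\psi$, and $K$) and which annihilates compact operators: $0 = \Gs(T) = \Gs(T_\psi) + \Gs(K) = \psi + 0 = \psi$. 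Hence $\psi = 0$, so $T = K$ is compact.

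\textbf{Where the subtlety lies.} The only genuinely delicate point is making sure all the objects live in the algebra on which $\Gs$ is defined and behaves as claimed. The symbol map $\Gs$ of \cite{dou72} is originally defined on the Toeplitz algebra $\mathcal{T}$ (the closed algebra generated by all $T_\phi$, $\phi \in L^\infty$), and $T$ — being a finite sum of finite products of Toeplitz operators — lies in $\mathcal{T}$; so does $T_\psi$, and so does every compact operator, since the commutator ideal of $\mathcal{T}$ contains the compacts. Thus $\Gs(T) = \Gs(T_\psi)$ is legitimate, and $\Gs(T_\psi) = \psi$ is the defining property of the symbol map, while $\Gs$ kills compacts. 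The extension to the Hankel algebra mentioned in the statement is not strictly needed here — it is recorded because it is exactly what Lemmas~\ref{eq1} and~\ref{eq2} will feed into later, when $T$ arises from Hankel-type pieces — but the corollary itself only requires the Toeplitz-algebra version. I expect no real obstacle: the proof is a two-line deduction from \cite{guo05} and \cite{ba82}, and the main thing to get right is simply citing the correct property of $\Gs$ (multiplicativity, $\Gs(T_\phi)=\phi$, $\Gs(\text{compact})=0$) and noting $T \in \mathcal{T}$.
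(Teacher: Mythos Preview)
Your proposal is correct and matches the paper's own argument essentially line for line: the paper also derives the corollary directly from \cite{guo05} together with the properties of the symbol map $\Gs$ from \cite{ba82} (namely $\Gs(T_\psi)=\psi$ and $\Gs(K)=0$ for compact $K$), concluding that the Toeplitz operator in the perturbation must be $T_0$. Your remark that the Hankel-algebra extension of $\Gs$ is not strictly needed here is accurate; only the Toeplitz-algebra version is used for this corollary.
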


We will use the following identities to verify \eqref{guo2}.
\begin{lem}\label{Tensor}\cite{zh96}, \cite{guo03}
Let $f, g\in L^2$.
$$
H^*_fH_g-T^*_{\phi_z}H^*_fH_gT_{\phi_z}=V[(H_fk_z)\otimes (H_gk_z)]V^*.
$$
$$
S_{\phi_z}H_fT_gT_{\bar{\phi_z}}-H_fT_g=-(H_fT_gk_z)\otimes k_z+[(H_fk_{z})\otimes(H^*_{g}Uk_{\bz})]T_{\bar{\phi_z}}.
$$
\end{lem}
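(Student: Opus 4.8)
The plan is to verify both identities by direct algebraic manipulation, using only the factorization rules in Proposition \ref{prop} together with the two projection identities recorded just before the lemma, namely $I-T_{\phi_z}T_{\ol{\phi}_z}=k_z\otimes k_z$ and \eqref{S}. For the first identity I would start from $H^*_f H_g = V^{-1}H_f V H_g$ is not quite the right move; instead observe that conjugating by $V$ turns $H^*_f H_g$ into $H_f H^*_g$ acting on $[H^2]^\perp$... cleaner is to work directly: write $T^*_{\phi_z} H^*_f H_g T_{\phi_z} = (H_f T_{\phi_z})^* (H_g T_{\phi_z})$ and use part (3) of Proposition \ref{prop}, which gives $H_g T_{\phi_z}=H_{g\phi_z}$, hence the product becomes $H^*_{f\phi_z}H_{g\phi_z}$. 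Now apply part (2), $H_{g\phi_z}=H_g T_{\phi_z}$ again or better $H_{\phi_z g}=H_{\phi_z}T_g+S_{\phi_z}H_g$; the term I actually want is the expansion that isolates a rank-one piece. The key point is that $I-T_{\phi_z}T_{\ol\phi_z}=k_z\otimes k_z$, so $T_{\phi_z}T_{\ol\phi_z}=I-k_z\otimes k_z$, and therefore
\[
H^*_f H_g - H^*_{f}T_{\phi_z}T_{\ol\phi_z}H_g = H^*_f (k_z\otimes k_z) H_g = (H^*_f k_z)\otimes (H^*_g k_z)\quad\text{is not it either}
\]
— the correct bookkeeping is that $H^*_f H_g - H^*_{f\phi_z}H_{g\phi_z}$ equals $H^*_f H_g - H^*_f T_{\ol\phi_z}^{\,*}T_{\ol\phi_z}H_g$ after recognizing $H_{g\phi_z}=T^*_{\ol\phi_z}$ is wrong; rather $H^*_{f\phi_z}=T^*_{\ol\phi_z}H^*_f$ by part (3) applied to adjoints. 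Putting these together, $H^*_f H_g - T^*_{\phi_z}H^*_f H_g T_{\phi_z} = H^*_f(I-T_{\phi_z}T_{\ol\phi_z})H_g = H^*_f (k_z\otimes k_z) H_g$, and since $(x\otimes y)$ composed on the left with $A$ and right with $B$ is $(Ax)\otimes(B^*y)$, this is $(H^*_f k_z)\otimes (H^*_g k_z)$. Finally conjugating the whole display by $V$ and using \eqref{V} to rewrite $H^*_f$ as $V^{-1}H_f V$ converts $(H^*_f k_z)\otimes(H^*_g k_z)$ into $V[(H_f k_z)\otimes(H_g k_z)]V^*$, which is the claimed formula; one must also check $Vk_z$ behaves correctly, but $V$ is antiunitary so $V(x\otimes y)V^* = (Vx)\otimes(Vy)$ up to a conjugation that the $V^{-1}H_f V = H^*_f$ relation absorbs.

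For the second identity the strategy is parallel but uses \eqref{S} in place of the $k_z\otimes k_z$ projection, together with the mixed formula from Proposition \ref{prop}(2). I would write $S_{\phi_z}H_f T_g T_{\ol\phi_z}$ and first pull $T_g T_{\ol\phi_z}$ inward: since $\ol\phi_z\notin H^2$ this does not collapse directly, so instead I use $S_{\phi_z}H_f = H_{\phi_z f} - H_{\phi_z}T_f$ from part (2) read backwards, or more efficiently recognize that $S_{\phi_z}H_f T_g = S_{\phi_z}H_{fg} - S_{\phi_z}S_f H_g$ is a detour; the cleanest route is $S_{\phi_z}H_f T_g T_{\ol\phi_z} - H_f T_g = (S_{\phi_z}H_f - H_f)T_g T_{\ol\phi_z} + H_f T_g(T_{\ol\phi_z}-I)$ is still messy. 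I think the intended computation is: insert $S^*_{\phi_z}S_{\phi_z}=I-(Uk_{\bar z})\otimes(Uk_{\bar z})$ from \eqref{S} and $T_{\phi_z}T_{\ol\phi_z}=I-k_z\otimes k_z$ to telescope $H_f T_g$ into $S_{\phi_z}S^*_{\phi_z}H_f T_g T_{\phi_z}T_{\ol\phi_z}$ plus error terms, then use $S^*_{\phi_z}H_f=H_{\ol\phi_z f}$-type relations (adjoint of part (3)/(4)) to simplify $S_{\phi_z}(\text{stuff})T_{\ol\phi_z}$ down to the $H_f T_g$ conjugate; the two rank-one corrections $-(H_f T_g k_z)\otimes k_z$ and $[(H_f k_z)\otimes(H^*_g U k_{\bar z})]T_{\ol\phi_z}$ then drop out of the two projection identities, one from each of \eqref{S} and $I-T_{\phi_z}T_{\ol\phi_z}=k_z\otimes k_z$.

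The main obstacle I anticipate is not any single deep idea but rather getting the bookkeeping exactly right in the second identity: tracking which Hankel/Toeplitz factorization rule applies (several require the relevant symbol to be analytic or co-analytic, and $\phi_z$ is co-analytic while $\ol\phi_z$ is not in $H^2$), keeping the antiunitary $V$ and unitary $U$ conjugations straight, and ensuring the rank-one terms are attributed to the correct projection identity with the correct composition rule $A(x\otimes y)B = (Ax)\otimes(B^*y)$. Once the telescoping is set up so that the "main term" is literally $S_{\phi_z}H_f T_g T_{\ol\phi_z}$ modulo the two explicit projections, the rank-one remainders are forced, and the identities follow by collecting terms. These identities are also essentially available in \cite{zh96} and \cite{guo03}, so I would present the proof as a short verification rather than reproving from scratch, citing those sources for the first identity and carrying out the $U$/$V$-conjugation algebra for the second.
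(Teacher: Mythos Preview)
The paper does not prove this lemma; it simply states the two identities and cites \cite{zh96} and \cite{guo03}. So your closing suggestion---to present the result as a citation rather than a full derivation---is exactly what the paper does.

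That said, your attempted verification of the first identity contains a real error. You settle on
\[
H^*_fH_g-T^*_{\phi_z}H^*_fH_gT_{\phi_z}=H^*_f\bigl(I-T_{\phi_z}T_{\ol\phi_z}\bigr)H_g,
\]
but this does not even typecheck: the middle factor $I-T_{\phi_z}T_{\ol\phi_z}$ acts on $H^2$, while $H_g$ lands in $[H^2]^\perp$, so the composition on the right is undefined. The correct step uses Proposition~\ref{prop}(3) and (4) together: since $\phi_z\in H^\infty$, we have $H_gT_{\phi_z}=H_{g\phi_z}=S_{\phi_z}H_g$ and hence $T^*_{\phi_z}H^*_f=H^*_fS^*_{\phi_z}$, giving
\[
H^*_fH_g-T^*_{\phi_z}H^*_fH_gT_{\phi_z}=H^*_f\bigl(I-S^*_{\phi_z}S_{\phi_z}\bigr)H_g=(H^*_fUk_{\bar z})\otimes(H^*_gUk_{\bar z})
\]
by \eqref{S}. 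One then checks, from $V^2=I$, \eqref{V}, and the elementary computation $Vk_z=Uk_{\bar z}$, that $VH_fk_z=H^*_fUk_{\bar z}$, which yields the stated form $V[(H_fk_z)\otimes(H_gk_z)]V^*$. In short, you have the roles of the two projection identities reversed: the first display needs \eqref{S}, not $I-T_{\phi_z}T_{\ol\phi_z}=k_z\otimes k_z$. (Your side remark $H^*_{f\phi_z}=T^*_{\ol\phi_z}H^*_f$ is also off by a conjugate: taking adjoints in $H_fT_{\phi_z}=H_{f\phi_z}$ gives $H^*_{f\phi_z}=T_{\ol\phi_z}H^*_f$.)

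For the second identity your sketch never commits to a computation; ``telescope and collect terms'' is not enough here because the bookkeeping is exactly the content. One clean route: write $H_fT_g=H_fT_g(T_{\phi_z}T_{\ol\phi_z}+k_z\otimes k_z)$, use $H_fT_gT_{\phi_z}=H_fT_{\phi_z}T_g-H_fH^*_{\ol\phi_z}H_g=S_{\phi_z}H_fT_g-(H_fk_z)\otimes(H^*_gUk_{\bar z})$ (the last from Proposition~\ref{prop}(1),(4) and the explicit rank of $H_{\ol\phi_z}$), and then right-multiply by $T_{\ol\phi_z}$. The two rank-one corrections then appear with the correct signs. Without this or an equivalent explicit computation, the proposal for the second identity is only a plan, not a proof.
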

\begin{rem}
In \cite{zh96}, \cite{guo03}, Hankel operators are defined in an alternative way as an operator from $H^2$ to $H^2$ as: $$\cH _fh=PU(fh).$$
It is easy to verify that $\cH_f=U H_f$.
\end{rem}

The next results interpret the local condition on the support set in an elementary way.
\begin{lem}\label{lim}\cite{gor99}*{Lemma 2.5, 2.6}
Let $f\in L^\infty$, $m\in M(H^\infty+C)$. Then the following are equivalent:
\begin{enumerate}
\item $f|_{S_m}\in H^\infty|_{S_m}.$
\item $\lim\limits_{z\to m}||H_f k_z||=0.$
\item $\lim\limits_{z\to m}||H^*_f U k_{\bz}||=0.$
\end{enumerate}
\end{lem}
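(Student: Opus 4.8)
The plan is to split the three-way equivalence into the ``soft'' part $(2)\Leftrightarrow(3)$, which is a pure symmetry argument, and the substantive part $(1)\Leftrightarrow(2)$, which rests on a weighted approximation identity for $\|H_fk_z\|$ together with the behaviour of the representing measures $\mu_z$ as $z\to m$.

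For $(2)\Leftrightarrow(3)$ I would first note that $V$ is an involution, $V^2=I$ (since $\bar z z=1$ on $\partial\DD$), and that $Vk_z=Uk_{\bar z}$: both functions are $w\mapsto \bar w\sqrt{1-|z|^2}/(1-z\bar w)$, a one-line check. Rewriting \eqref{V} as $H_f=VH^*_fV$ and using that $V$ preserves norms then gives, for every $z\in\DD$,
\[
\|H^*_f Uk_{\bar z}\|=\|H^*_f Vk_z\|=\|VH^*_f Vk_z\|=\|H_f k_z\|,
\]
so $(2)$ and $(3)$ are not merely equivalent but assert the vanishing of the same net of numbers.

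For $(1)\Leftrightarrow(2)$ the starting point is that $k_z$ and $1/k_z$ both lie in $H^\infty$, so multiplication by $k_z$ is a bijection of $H^2$, whence
\[
\|H_f k_z\|^2=\operatorname{dist}_{L^2}(fk_z,H^2)^2=\inf_{h\in H^\infty}\int_{\partial\DD}|f-h|^2\,d\mu_z=\operatorname{dist}_{L^2(\mu_z)}(f,H^\infty)^2,
\]
where $\mu_z=|k_z|^2\,d\theta/2\pi$ is harmonic measure at $z$. The other ingredient is that $z\mapsto\mu_z$ extends weak-$*$ continuously from $\DD$ to all of $M(H^\infty)$ (a compactness argument using Hoffman's uniqueness of representing measures on $M(L^\infty)$), so that for a fixed $h\in H^\infty$ and any net $z_\alpha\to m$ one has $\int|f-h|^2\,d\mu_{z_\alpha}\to m(|f-h|^2)=\int_{S_m}|f-h|^2\,d\mu_m$. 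The implication $(1)\Rightarrow(2)$ is then immediate: if $f|_{S_m}=h|_{S_m}$ with $h\in H^\infty$ then $m(|f-h|^2)=0$, and since $H_f k_z=H_{f-h}k_z$ we get $\|H_fk_z\|^2\le\int|f-h|^2\,d\mu_z\to0$.

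The converse $(2)\Rightarrow(1)$ is the real content. From the identity above, $(2)$ says $\operatorname{dist}_{L^2(\mu_z)}(f,H^\infty)\to0$; passing to the limit $z\to m$ and again invoking weak-$*$ convergence $\mu_z\to\mu_m$ — here one needs a little care to move the infimum past the limit, e.g. by writing the distance in dual form as a supremum over the unit ball of $[H^2]^{\perp}$ in $L^2(\mu_z)$ and exploiting that these balls sit inside one weak-$*$ compact set — one obtains $\operatorname{dist}_{L^2(\mu_m)}\!\big(f,\overline{H^\infty|_{S_m}}^{\,L^2(\mu_m)}\big)=0$, i.e. $f|_{S_m}$ lies in the $L^2(\mu_m)$-closure of $H^\infty|_{S_m}$. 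It then remains to upgrade this to $f|_{S_m}\in H^\infty|_{S_m}$, and this is where the special structure of the support sets enters: for $m\in M(H^\infty+C)$ the restriction $H^\infty|_{S_m}$ is a weak-$*$ Dirichlet algebra in $L^\infty(\mu_m)$ that is weak-$*$ closed, and for such algebras the $L^2(\mu_m)$-closure meets $L^\infty(\mu_m)$ in exactly the algebra. I expect this final $L^2$-to-$L^\infty$ step on $S_m$ — rather than the approximation identities, which are routine — to be the main obstacle, and the point at which one must cite (or reprove) the relevant facts about representing measures on $M(L^\infty)$.
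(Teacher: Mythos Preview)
The paper does not prove this lemma; it is quoted from \cite{gor99} and used as a black box, so there is no in-paper argument to compare against. Your treatment of $(2)\Leftrightarrow(3)$ is correct and efficient---in fact $\|H_fk_z\|=\|H_f^*Uk_{\bar z}\|$ for every $z\in\DD$---and $(1)\Rightarrow(2)$ via the weak-$*$ convergence $\mu_z\to\mu_m$ of representing measures is the standard move and is fine.

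The genuine gap is in $(2)\Rightarrow(1)$, and it sits earlier than you think. The step you call ``a little care''---deducing $\operatorname{dist}_{L^2(\mu_m)}(f,H^2(m))=0$ from $\operatorname{dist}_{L^2(\mu_z)}(f,H^\infty)\to0$---is not routine: an infimum of functions continuous in $z$ is only \emph{upper} semicontinuous, so weak-$*$ convergence of $\mu_z$ yields $\limsup_z d_z\le d_m$, which is the wrong inequality. Your dual-form idea does not close this. If one uses the $z$-dependent unit ball of the annihilator in $L^2(\mu_z)$ there is no common compact set to invoke, and if one passes to the fixed unit ball of $\overline{H^2_0}\subset L^2(d\theta)$ the pairing becomes $\langle fk_z,\psi\rangle_{L^2}$, where $k_z\to0$ weakly kills the limit. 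Producing an approximant in $H^\infty$ (or even in $H^2(m)$) that survives the passage $z\to m$ is exactly the substance supplied by \cite{gor99}. By contrast, once $f|_{S_m}\in H^2(m)\cap L^\infty(\mu_m)$ is known, the upgrade to $H^\infty|_{S_m}$ via the weak-$*$ Dirichlet algebra structure of $H^\infty|_{S_m}$ is a standard closing move, so the obstacle is not quite where you placed it.
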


\begin{lem}\label{*lim} \cite{guo03}*{Lemma 17,18}
Let $f,g\in L^\infty$, $m\in M(H^\infty+C)$.
\begin{enumerate}
\item If
$$\lim_{z\to m}||H_f k_z||=0,$$ then $$\lim_{z\to m}||H_fT_g k_z||=0.$$
\item If
$$\lim_{z\to m}||H^*_f U k_{\bz}||=0,$$ then $$\lim_{z\to m}||H^*_f S_g Uk_{\bz}||=0.$$
\end{enumerate}
\end{lem}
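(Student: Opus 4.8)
The plan is to deduce (2) from (1) and then prove (1).

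For the deduction, note that $V$ is an isometric involution with $V^{-1}H_\varphi V=H^*_\varphi$ (see \eqref{V}); one checks directly that $VT_\varphi V=S_{\bar\varphi}$ (equivalently $VS_\varphi V=T_{\bar\varphi}$) and that $Uk_{\bar z}=Vk_z$. Consequently
$$H^*_fUk_{\bar z}=V(H_fk_z),\qquad H^*_fS_gUk_{\bar z}=V\big(H_fT_{\bar g}k_z\big),$$
so $\|H^*_fUk_{\bar z}\|=\|H_fk_z\|$ and $\|H^*_fS_gUk_{\bar z}\|=\|H_fT_{\bar g}k_z\|$. Hence (2) for the pair $(f,g)$ is exactly (1) for the pair $(f,\bar g)$, and since $\bar g\in L^\infty$ it suffices to prove (1).

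For (1): by Lemma \ref{lim} the hypothesis $\lim_{z\to m}\|H_fk_z\|=0$ means $f|_{S_m}\in H^\infty|_{S_m}$. So given $\varepsilon>0$ we may choose $h\in H^\infty$ with $\|(f-h)|_{S_m}\|_\infty<\varepsilon$; since $H_f=H_{f-h}$ we have $H_fT_g=H_{f-h}T_g$, and it is enough to prove the uniform estimate
$$\limsup_{z\to m}\|H_\psi T_gk_z\|\le\|g\|_\infty\,\|\psi|_{S_m}\|_\infty\qquad(\psi\in L^\infty),$$
and then let $\varepsilon\to0$. Bound $\|H_\psi T_gk_z\|=\|(I-P)(\psi\,T_gk_z)\|\le\|\psi\,T_gk_z\|_{L^2}=\big(\int_{\partial\DD}|\psi|^2|T_gk_z|^2\,dm\big)^{1/2}$. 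The estimate then follows once one knows that the measures $|T_gk_z|^2\,dm$, viewed on $M(L^\infty)$, have total mass $\le\|g\|_\infty^2$ (since $\|T_gk_z\|\le\|g\|_\infty$) and are asymptotically supported on $S_m$ as $z\to m$, i.e.\ every weak-$*$ limit point along a net $z\to m$ is a positive measure carried by $S_m$: for then, along a subnet realizing the $\limsup$, $\int|\psi|^2|T_gk_z|^2\,dm\to\int|\psi|^2\,d\nu\le\|\psi|_{S_m}\|_\infty^2\,\|g\|_\infty^2$.

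The concentration statement is the crux. Decompose $T_gk_z=\tilde g(z)k_z+r_z$, where $\tilde g$ is the harmonic extension of $g$, $r_z=P\big((g-\tilde g(z))k_z\big)$, $r_z\perp k_z$, and $\|r_z\|\le\|g\|_\infty$. For the scalar-multiple-of-$k_z$ term, $\int\varphi\,|\tilde g(z)|^2|k_z|^2\,dm=|\tilde g(z)|^2\,\tilde\varphi(z)$ with $|\tilde g(z)|\le\|g\|_\infty$, and it is classical that the harmonic measures $|k_z|^2\,dm$ have their mass asymptotically on $S_m$ as $z\to m$; so this part is fine. The delicate point is the term $r_z$: one must show that the non-local projection $P$ does not spread the boundary concentration of $(g-\tilde g(z))k_z$ off of $S_m$. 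One attacks this by splitting $(g-\tilde g(z))k_z$ into its restriction to a small neighbourhood $W$ of $S_m$ and the rest — the latter having vanishing $L^2$-mass as $z\to m$ because $|k_z|^2\,dm$ concentrates, hence negligible $P$-image — while for the part supported in $W$ one uses the elementary fact that the analytic projection of a function supported in a short arc has small $L^2$-mass away from that arc, and then shrinks $W$ toward $S_m$. Carrying out this local analysis rigorously, i.e.\ controlling the interplay of $P$ with the fine geometry of $S_m$, is the main obstacle; everything else is soft.
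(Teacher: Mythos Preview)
The paper does not prove this lemma; it is quoted from \cite{guo03}*{Lemmas 17, 18} without argument. So there is no in-paper proof to compare against, and I comment only on the soundness of your proposal.

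Your reduction of (2) to (1) via the anti-unitary $V$ is clean and correct: the identities $V^{-1}H_\varphi V=H_\varphi^*$, $VS_\varphi V=T_{\bar\varphi}$ and $Vk_z=Uk_{\bar z}$ indeed give $\|H_f^*S_gUk_{\bar z}\|=\|H_fT_{\bar g}k_z\|$, so (2) for $(f,g)$ follows from (1) for $(f,\bar g)$.

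Your attack on (1), however, has a genuine gap---one you yourself flag as ``the main obstacle''---and it cannot be closed along the lines you sketch. Two problems. First, $S_m$ is a subset of $M(L^\infty)$, not of $\partial\DD$; the weak-$*$ convergence $|k_z|^2\,dm\to\mu_m$ takes place on $M(L^\infty)$, and ``neighbourhood of $S_m$'' does not mean ``short arc on the circle''. Second, and more damaging, the Riesz projection $P$ is \emph{not} local in the way you need. With your notation, once you split $(g-\tilde g(z))k_z$ into a piece $u_1$ supported in $W$ and a remainder $u_2$, the remainder is indeed $o(1)$ in $L^2$; but for $u_1$ the Calder\'on--Zygmund bound gives only
\[
\int_{(W')^c}|Pu_1|^2\;\lesssim\;\frac{\|u_1\|_1^2}{\operatorname{dist}(W,(W')^c)},
\]
and here $\|u_1\|_1$ is merely bounded (by $2\|g\|_\infty$), not small, while the distance in the denominator is fixed independently of $z$. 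So the bound does not tend to $0$ as $z\to m$, and shrinking $W$ afterwards does not help because the constant blows up. The concentration of $|T_gk_z|^2\,dm$ on $S_m$ that you are aiming for may well be true, but it is essentially equivalent in difficulty to the lemma itself; it is not a softer intermediate statement.

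The proof in \cite{guo03} is algebraic rather than measure-theoretic: after replacing $f$ by $f-h$ with $h\in H^\infty$ so that $f|_{S_m}=0$, one exploits identities such as $H_fT_g=H_{fg}-S_fH_g$ (Proposition~\ref{prop}(2)) and the intertwining $US_fU=T_{\tilde f}$, together with both directions (2) and (3) of Lemma~\ref{lim}, to reduce everything to quantities of the form $\|H_\psi k_z\|$ or $\|H_\psi^*Uk_{\bar z}\|$ with $\psi|_{S_m}=0$. I would consult \cite{guo03} directly rather than attempt to repair the concentration approach.
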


\section{Zero product}

In this section, we characterize zero product of a truncated Hankel operator and a truncated Toeplitz operator.

\begin{lem}\label{q0}
Let $f_i, g_i\in L^2$, $i=1,...,n$. Let $\pi: L^2\to L^2/H^2$ be the quotient map.
If $\{\pi(f_i) \}_{i=1}^n$ are linearly independent, and either
\beq\label{q01}
\sum_{i=1}^n (H_{f_i}1)\otimes (H_{g_i}1)=0,
\eeq
or
\beq\label{q02}
\sum_{i=1}^n (H_{f_i}1)\otimes (H^*_{g_i}\bz)=0,
\eeq
then
$g_i\in H^2$, for $i=1,...,n$.
\end{lem}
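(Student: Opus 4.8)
The plan is to reduce the statement to the following elementary fact about rank-one operators: if $\sum_{i=1}^n x_i\otimes y_i=0$ and the vectors $x_1,\dots,x_n$ are linearly independent, then $y_1=\dots=y_n=0$. Indeed, evaluating the operator on an arbitrary vector $h$ gives $\sum_{i=1}^n\langle h,y_i\rangle x_i=0$, and linear independence of the $x_i$ forces $\langle h,y_i\rangle=0$ for every $h$, whence each $y_i=0$.

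To set this up I would first observe that $H_f 1=(I-P)f$ for $f\in L^2$, so the map $f\mapsto H_f1$ has kernel exactly $H^2$. Consequently $\sum_{i=1}^n c_i H_{f_i}1=0$ for scalars $c_i$ if and only if $\sum_i c_i f_i\in H^2$, i.e. if and only if $\sum_i c_i\pi(f_i)=0$. Thus the hypothesis that $\{\pi(f_i)\}_{i=1}^n$ are linearly independent is exactly the statement that the vectors $\{H_{f_i}1\}_{i=1}^n\subset[H^2]^{\perp}$ are linearly independent, which is the input the rank-one fact needs.

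Now apply the fact. In case \eqref{q01}, take $x_i=H_{f_i}1$ and $y_i=H_{g_i}1$; we conclude $H_{g_i}1=(I-P)g_i=0$, i.e. $g_i\in H^2$. In case \eqref{q02}, take $x_i=H_{f_i}1$ and $y_i=H^*_{g_i}\bz$, so that $H^*_{g_i}\bz=0$ for each $i$. Here I use \eqref{V} together with the fact that $V$ is an antiunitary involution: since $V^{-1}=V$, \eqref{V} reads $H^*_{g_i}=VH_{g_i}V$, and because $V\bz=1$ we get $H^*_{g_i}\bz=V(H_{g_i}1)$. Injectivity of $V$ then forces $H_{g_i}1=0$, so again $g_i\in H^2$, finishing the proof.

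There is essentially no substantive obstacle; the two points that need a little care are the translation of the linear-independence hypothesis on $\{\pi(f_i)\}$ into independence of the vectors $H_{f_i}1$, and, in the second case, the bookkeeping with $V$ — in particular verifying $V\bz=1$ and $V^2=I$ so that \eqref{V} yields $H^*_{g_i}\bz=V(H_{g_i}1)$.
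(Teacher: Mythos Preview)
Your proof is correct and follows essentially the same approach as the paper: both hinge on the observation that $H_f1=(I-P)f$ identifies linear independence of $\{\pi(f_i)\}$ with that of $\{H_{f_i}1\}$, together with the equivalence $H_f1=0\iff H^*_f\bar z=0\iff f\in H^2$. The only cosmetic difference is that you abstract the rank-one argument (if $\sum x_i\otimes y_i=0$ with $x_i$ independent then each $y_i=0$) and apply it directly, whereas the paper argues by contradiction, applying the operator to a specific nonzero $H_{g_j}1$; your packaging is slightly cleaner but the content is the same.
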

\begin{proof}
First notice that for $f\in L^2$ $$H_f 1=0\iff H^*_f\bz=0\iff f\in H^2.$$
Suppose \eqref{q01} holds and the conclusion is not true. A similar argument can be applied to \eqref{q02}. We may assume $g_1\notin H^2$, then $||H_{g_1}1||>0$.
Apply the operator $\sum_{i=1}^n (H_{f_i}1)\otimes (H_{g_i}1)$ to $H_{g_1}1$, we have
$$
\sum_{i=1}^n \la H_{g_1}1, H_{g_i}1\ra H_{f_i}1=0.
$$
Then
$$
H_{\sum_{i=1}^n a_if_i}1=0,
$$
where $a_i= \la H_{g_1}1, H_{g_i}1\ra$ and $a_1\neq 0$. Therefore $\sum_{i=1}^n a_if_i\in H^2$, which is a contradiction.
\end{proof}

Let $K_\Gt+\CC \Gt$ denote the set $$K_\Gt+\CC \Gt=\{f=f_1+c\Gt|f_1\in K_\Gt, c\in\CC \}=\{f\in H^2| \Gt\barf\in H^2\}.$$
By Theorem \ref{T0}, $A^\Gt_g$ has a symbol in $g_1+\bg_2$, where $g_1, g_2\in K_\Gt$. Furthermore, $g_1$ and $g_2$ are uniquely determined if we fix the value of one of them at the origin \cite{bcfmd}. Therefore we may assume $g_1(0)=0$ and we characterize the zero product of a truncated Hankel operator and a truncated Toeplitz operator.

\begin{thm}\label{zero}
Let $f\in L^2$. Let $g=g_1+\ol{g_2}$, where $g_1, g_2 \in K_\Gt$ and $g_1(0)=0$. Then $$H^\Gt_f A^\Gt_g=0$$ if and only if one of the following holds
\begin{enumerate}
\item $g=0$.
\item $f$ is a constant.
\item $f\in K_\Gt+\CC \Gt$, and $\bg, \ol{fg}\in H^2$.
\item there exist $\Ga, \Gb\in \CC$ such that
\begin{enumerate}
\item $g=g_1+\Ga \bar{\Gt}g_1+\Gb(1-\Gt(0)\bar{\Gt}).$
\item $f(g_1+\Gb), f(\Ga-\Gt)\in K_\Gt+\CC \Gt$.
\item $\bar{\Gb}\barf+\bar{\Ga}\Gt\bg_1 \in H^2$.
\end{enumerate}
\end{enumerate}
\end{thm}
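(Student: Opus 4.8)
The plan is to reduce the statement, via Lemmas~\ref{eq1} and~\ref{eq2}, to two simultaneous operator equations and then extract the pointwise conditions on $f$ and $g$ using Lemma~\ref{q0}. Concretely, by the remark following Lemma~\ref{eq1}, $H^\Gt_f A^\Gt_g=0$ if and only if both $H_{\bar\Gt}T_{\bg}H^*_f-H_{\Gt f}H^*_{\bg}=0$ and $T_{\bar\Gt f}T_gH^*_{\bar\Gt}-T_fH^*_{\bg}=0$. Applying the antiunitary conjugation in Lemma~\ref{eq2} turns the first into
$$
H^*_{\bar\Gt}H_{fg_1}+H^*_{\bar\Gt g_2}H_f-H^*_{\bg_1}H_{\Gt f}=0,
$$
and the adjoint of the second into
$$
H_{\bar\Gt}T_{\Gt\ol{fg_1}}+H_{\bar\Gt g_2}T_{\Gt\barf}-H_{\bg_1}T_{\barf}=0.
$$
First I would record that $H_\psi$ depends only on $(I-P)\psi$, rewrite $H_{\bar\Gt}=H^*_\Gt{}'$ appropriately, and note the model-space membership of $g_1,g_2$ so that $\bar\Gt g_1,\bar\Gt g_2\in\overline{H^2}$ (hence some Hankel operators above are of the special form in Proposition~\ref{prop}(4)).

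Next I would test these two operator identities on the constant function $1$ (equivalently on the reproducing kernel $k_0^\Gt$) and on appropriate rank-one building blocks. Evaluating the Hankel-sum identity at $1$ and at elements of $[H^2]^\perp$ gives relations of the form $\sum (H_{f_i}1)\otimes(H_{g_i}1)=0$ and $\sum(H_{f_i}1)\otimes(H^*_{g_i}\bar z)=0$ with the $f_i$ built from $\bar\Gt$, $\bar\Gt g_2$, $\bg_1$ and the $g_i$ built from $fg_1$, $f$, $\Gt f$. Lemma~\ref{q0} then forces, \emph{provided the relevant $\pi(f_i)$ are linearly independent}, that $fg_1$, $f$, and $\Gt f$ all lie in $H^2$ modulo the coefficients — but in general the $\pi(f_i)$ are dependent, and this is exactly where the case split in the theorem comes from. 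So the core of the argument is a careful analysis of the possible linear dependence among $\pi(\bar\Gt)$, $\pi(\bar\Gt g_2)$, $\pi(\bg_1)$ (and their analogues in the second identity). Roughly: if these are independent, Lemma~\ref{q0} yields case~(3) or forces $g=0$; if $\pi(\bg_1)$ is the only obstruction one gets $f$ constant (case~(2)); and the genuinely dependent configurations — where $\bg_1$, $\bar\Gt g_2$ and $\bar\Gt$ satisfy a nontrivial linear relation modulo $H^2$ — produce, after solving that relation for $g_1,g_2$, the scalars $\Ga,\Gb$ and the structural description in case~(4)(a), with conditions (4)(b) and (4)(c) being precisely the residual content of the two operator equations once the dependence has been used up.

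For the dependence analysis I would argue as follows. A nontrivial relation $c_1\bg_1+c_2\bar\Gt g_2+c_3\bar\Gt\in H^2$ with $g_1,g_2\in K_\Gt$, $g_1(0)=0$, translates (multiplying by $\Gt$, using $\Gt\bg_i=\ol{(\text{something in }H^2)}$ only when the function is in $K_\Gt$) into an $H^2$-membership that pins $g_2$ to a multiple of $\ol{\Gt g_1}$-type terms plus a multiple of $k_0^\Gt=1-\Gt(0)\bar\Gt$; writing out $\bg=\bg_1+g_2$ and re-expressing in terms of $g_1$ gives exactly $g=g_1+\Ga\bar\Gt g_1+\Gb(1-\Gt(0)\bar\Gt)$. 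Substituting this back into the two operator identities and cancelling the terms that vanish by the relation leaves conditions equivalent to $f(g_1+\Gb),\ f(\Ga-\Gt)\in K_\Gt+\CC\Gt$ (from the parts asserting certain products are in $H^2$ with $\Gt$ times them also in $H^2$, i.e.\ the characterization $K_\Gt+\CC\Gt=\{h\in H^2:\Gt\bar h\in H^2\}$) together with the single scalar-weighted $H^2$ condition $\bar\Gb\barf+\bar\Ga\Gt\bg_1\in H^2$. Conversely, each of (1)--(4) is checked to make both operator identities hold by direct substitution using Proposition~\ref{prop} and Theorems~\ref{T0},~\ref{H0}; this direction is routine.

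The main obstacle I anticipate is the bookkeeping in the dependence analysis: correctly enumerating which linear combinations of $\pi(\bar\Gt),\pi(\bar\Gt g_2),\pi(\bg_1)$ can vanish, translating each such relation into the normalized form for $g$ (using the uniqueness of $g_1,g_2$ under $g_1(0)=0$ from~\cite{bcfmd}), and verifying that after using the relation the \emph{remaining} content of the two operator equations is captured exactly by (4)(b)--(4)(c) and nothing more. A secondary subtlety is handling the hypothesis $f\in L^2$ rather than $f\in L^\infty$, so that a priori the operators are only densely defined; I would circumvent this by working throughout with the identities applied to $k_w^\Gt$ and to $\bar z^n$, where all expressions are honestly in $L^2$, and never invoke boundedness of the composed operators.
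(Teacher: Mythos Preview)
Your overall strategy is exactly the paper's: reduce via Lemmas~\ref{eq1}--\ref{eq2} to the two operator identities, extract the rank-one tensor relations (this is Lemma~\ref{Tensor} specialized at $z=0$, giving $k_0=1$ and $Uk_0=\bar z$), and then split cases according to the linear dependence of $\{\pi(\bar\Gt),\pi(\bar\Gt g_2),\pi(\bg_1)\}$ via Lemma~\ref{q0}. The sufficiency direction is also handled as you say, by direct substitution.

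However, your routing of the cases is inverted. When $\pi(\bar\Gt),\pi(\bar\Gt g_2),\pi(\bg_1)$ are linearly \emph{independent}, Lemma~\ref{q0} applied to the two tensor relations forces $f\in H^2$ and $\barf\in H^2$, so $f$ is constant: this is case~(2), not case~(3). (Note $\pi(\bar\Gt)\neq 0$ always, since $\Gt$ is nonconstant inner; so ``independence'' is a live option and never collapses to $g=0$.) Case~(3) does not come from independence; it arises in the \emph{dependent} subcase. The paper organizes the dependent case around a nontrivial relation $t_1\bar\Gt+t_2\bar\Gt g_2+t_3\bg_1\in H^2$ and splits on whether $t_2=0$. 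If $t_2=0$, evaluating the relation at $0$ and pairing against $\Gt$ (using $g_1(0)=0$) forces $t_1=0$ and hence $g_1=0$; then Lemma~\ref{q0} on the surviving two-term tensor relations yields $f\in H^2$, $\Gt\barf\in H^2$ (i.e.\ $f\in K_\Gt+\CC\Gt$) and $\ol{fg}\in H^2$, which is case~(3), or case~(1) if additionally $g_2=0$. If $t_2\neq 0$, normalize $t_2=1$, solve for $\bg_2$, and one lands on~(4)(a); conditions~(4)(b) and~(4)(c) then drop out as you outlined. So the method is right, but the map \{independence, dependence with $t_2=0$, dependence with $t_2\neq 0$\} $\to$ \{(2), (3) or (1), (4)\} needs to be corrected in your write-up.
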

\begin{proof}
By Lemma \ref{eq1}, Lemma \ref{eq2}, $H^\Gt_f A^\Gt_g=0$ if and only if
\beq\label{p01}H^*_{\bar{\Gt}}H_{fg_1}+H^*_{\bar{\Gt}g_2}H_f-H^*_{\bg_1}H_{\Gt f}=0,\eeq
and
\beq\label{p02}H_{\bar{\Gt}}T_{\Gt\ol{fg_1}}+H_{\bar{\Gt}g_2}T_{\Gt\barf}-H_{\bg_1}T_{\barf}=0.\eeq
Necessity: Suppose $H^\Gt_f A^\Gt_g=0$. By Lemma \ref{Tensor} and let $z=0$, we have
\beq\label{01}
H_{\bar{\Gt}}1\otimes H_{fg_1}1+H_{\bar{\Gt}g_2}1\otimes H_f1+H_{\bg_1}1\otimes H_{-\Gt f}1=0,
\eeq
and
\beq\label{02}
H_{\bar{\Gt}}1\otimes H^*_{\Gt\ol{fg_1}}\bz+H_{\bar{\Gt}g_2}1\otimes H^*_{\Gt\barf}\bz+H_{\bg_1}1\otimes H^*_{-\barf}\bz=0.
\eeq
If $\{\pi(\bar{\Gt}), \pi(\bar{\Gt}g_2), \pi(\bg_1)\}$ are linearly independent, then by Lemma \ref{q0} we must have $f\in H^2$ and $\barf\in H^2$. Thus condition (2) holds.
Now we assume there exist $t_1, t_2, t_3\in\CC$, not all $0$, such that
$$
t_1\bar{\Gt}+t_2\bar{\Gt}g_2+t_3\bg_1\in H^2,
$$
which means
\beq\label{C}
\bt_1\Gt+\bt_2\Gt\bg_2+\bt_3g_1=C\in \CC.
\eeq
Since $(\Gt\bg_2)(0)=g_1(0)=0$, we see that
\beq\label{t1}
\bt_1\Gt(0)=C.
\eeq
On the other hand,
\beq\label{t11}
C\ol{\Gt(0)}=\la C, \Gt\ra=\la\bt_1\Gt+\bt_2\Gt\bg_2+\bt_3g_1, \Gt  \ra=\bt_1+\bt_2\ol{g_2(0)}.
\eeq
Combing \eqref{t1} and \eqref{t11}, we have $$\bt_2\ol{g_2(0)}=\bt_1(|\Gt(0)|^2-1).$$
We consider the following two case:

Case I: If $t_2=0$, then $t_1=C=0$, and thus $g_1=0$. We have $$H^*_{\bar{\Gt}g_2}H_f=H_{\bar{\Gt}g_2}T_{\Gt\barf}=0,\q H_{\bar{\Gt}g_2}1\otimes H_f1=H_{\bar{\Gt}g_2}1\otimes H^*_{\Gt\barf}\bz=0.$$
Assume $g_2\neq 0$, then $\bar{\Gt}g_2\notin H^2$. Then by Lemma \ref{q0},
$f, \Gt\barf\in H^2$. Also $H_{\bar{\Gt}g_2}T_{\Gt\barf}=H_{\barf g_2}=0$ implies $\barf g_2=\ol{fg}\in H^2.$ Therefore we get condition (3).

Case II: If $t_2\neq 0$, we may assume $t_2=1$, then $$t_1=\frac{\ol{g_2(0)}}{|\Gt(0)|^2-1}, \q C=\frac{\Gt(0)g_2(0)}{|\Gt(0)|^2-1}.$$
Denote $\Gb=-t_1$. We may restate \eqref{C} as: there exists $\Ga\in\CC$ such that
$$\Gt\bg_2=\Gb\Gt+\Ga g_1-\Gt(0)\Gb,$$
or
$$\bg_2=\Gb(1-\Gt(0)\bar{\Gt})+\Ga \bar{\Gt}g_1,$$
which gives condition (4a).
By \eqref{01}, \eqref{02} we have
\begin{align*}
&H_{\bar{\Gt}}1\otimes H_{fg_1}1+H_{\bar{\Gt}g_2}1\otimes H_f1+H_{\bg_1}1\otimes H_{-\Gt f}1\\
=&H_{\bar{\Gt}g_2-\bar{\Gb}\bar{\Gt}-\bar{\Ga}\bg_1}1\otimes H_f1+H_{\bar{\Gt}}1\otimes H_{fg_1+\Gb f}1+H_{\bg_1}1\otimes H_{-\Gt f+\Ga f}1
\end{align*}
and
\begin{align*}
&H_{\bar{\Gt}}1\otimes H^*_{\Gt\ol{fg_1}}\bz+H_{\bar{\Gt}g_2}1\otimes H^*_{\Gt\barf}\bz+H_{\bg_1}1\otimes H^*_{-\barf}\bz\\
=&H_{\bar{\Gt}g_2-\bar{\Gb}\bar{\Gt}-\bar{\Ga}\bg_1}1\otimes H^*_{\Gt\barf}\bz+H_{\bar{\Gt}}1\otimes H^*_{\Gt\ol{fg_1}+\bar{\Gb}\Gt\barf}\bz+H_{\bg_1}1\otimes H^*_{-\barf+\bar{\Ga}\Gt\barf}\bz.
\end{align*}
Notice that in this case, $\pi(\bar{\Gt}), \pi(\bg_1)$ are linearly independent, and
\beq\label{00}H_{\bar{\Gt}g_2-\bar{\Gb}\bar{\Gt}-\bar{\Ga}\bg_1}=0.\eeq By Lemma \ref{q0}, the functions
$$
f(g_1+\Gb), \,f(\Ga-\Gt),\, \Gt\barf (\bg_1+\bar{\Gb}),\, \barf (\bar{\Ga}\Gt-1)
$$
are all analytic, which imply condition (4b). Condition (4c) follows from the identity
\begin{align}\label{ht}
0=&H_{\bar{\Gt}}T_{\Gt\ol{fg_1}}+H_{\bar{\Gt}g_2}T_{\Gt\barf}-H_{\bg_1}T_{\barf}\\
\nnb=&H_{\bar{\Gt}g_2-\bar{\Gb}\bar{\Gt}-\bar{\Ga}\bg_1}T_{\Gt\barf}+H_{\bar{\Gt}}T_{\Gt\ol{fg_1}+\bar{\Gb}\Gt\barf}+H_{\bg_1}T_{-\barf+\bar{\Ga}\Gt\barf}\\
\nnb=&H_{\ol{fg_1}+\bar{\Gb}\barf}+H_{-\ol{fg_1}+\bar{\Ga}\Gt\ol{fg_1}}=H_{\bar{\Gb}\barf+\bar{\Ga}\Gt\bg_1}.
\end{align}

Sufficiency: If condition (1) holds, it is obvious.

If condition (2) holds, it follows from Theorem \ref{H0}.

If condition (3) holds, then $H_{fg_1}=H_{\bg_1}=H_{\Gt f}=0$ , which give \eqref{p01}.
Also
$$
H_{\bar{\Gt}g_2}T_{\Gt\barf}=H_{\barf g_2}=H_{\ol{fg}}=0,
$$
and thus \eqref{p02} holds.

If condition (4) holds, we have \eqref{00}. Then \eqref{p02} follows from \eqref{ht}, and we can verify \eqref{p01} as
\begin{align*}
&H^*_{\bar{\Gt}}H_{fg_1}+H^*_{\bar{\Gt}g_2}H_f-H^*_{\bg_1}H_{\Gt f}\\
=&H^*_{\bar{\Gt}g_2-\bar{\Gb}\bar{\Gt}-\bar{\Ga}\bg_1}H_f+H^*_{\bar{\Gt}}H_{fg_1+\Gb f}+H^*_{\bg_1}H_{-\Gt f+\Ga f}=0.
\end{align*}

\end{proof}

\begin{cor}
Let $f\in L^2$. Let $g=g_1+\ol{g_2}+\Gt g_3+\ol{\Gt g_4}$, where $g_1, g_2 \in K_\Gt$ and $g_3, g_4\in H^2$. Then $$H^\Gt_f A^\Gt_g=0$$ if and only if one of the following holds
\begin{enumerate}
\item $g\in \Gt H^2+\ol{\Gt H^2}$.
\item $f$ is a constant.
\item $f\in K_\Gt+\CC \Gt$, $g_1=0$ and $\ol{f(g_1+\ol{g_2})}\in H^2$.
\item there exist $\Ga, \Gb\in \CC$ such that
\begin{enumerate}
\item $g=g_1+\Ga \bar{\Gt}g_1+\Gb(1-\Gt(0)\bar{\Gt})+\Gt g_3+\ol{\Gt g_4}.$
\item $f(g_1+\Gb), f(\Ga-\Gt)\in K_\Gt+\CC \Gt$.
\item $\bar{\Gb}\barf+\bar{\Ga}\Gt\bg_1 \in H^2$.
\end{enumerate}
\end{enumerate}
\end{cor}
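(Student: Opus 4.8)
The plan is to reduce the general symbol $g = g_1 + \ol{g_2} + \Gt g_3 + \ol{\Gt g_4}$ to the case already treated in Theorem \ref{zero}, by absorbing the analytic $\Gt$-multiple pieces into the operator rather than into the symbol. First I would observe that by Theorem \ref{T0}, the truncated Toeplitz operator $A^\Gt_g$ depends only on the coset of $g$ modulo $\Gt H^2 + \ol{\Gt H^2}$; indeed the direct computation in the proof of Lemma \ref{eq1} shows $A^\Gt_g u = T_g u - T_\Gt T_{\bar\Gt g} u$ for $u \in K_\Gt$, and if we write $g' = g_1 + \ol{g_2}$ (the $K_\Gt + \ol{K_\Gt}$ part), then $g - g' = \Gt g_3 + \ol{\Gt g_4} \in \Gt H^2 + \ol{\Gt H^2}$, so $A^\Gt_g = A^\Gt_{g'}$. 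Hence $H^\Gt_f A^\Gt_g = H^\Gt_f A^\Gt_{g'} = 0$ if and only if $g'$ satisfies one of the four alternatives of Theorem \ref{zero} (with $g'_1 = g_1$, after a harmless normalization at the origin, and $g'_2 = g_2$, modulo the non-uniqueness of $g_2$ pinned down once $g_1(0)$ is fixed as in \cite{bcfmd}).

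Next I would translate each of the four conditions of Theorem \ref{zero}, stated for $g'$, back into a condition on $g$. Condition (1) there is $g' = 0$, i.e. $g_1 = 0$ and $g_2 = 0$ (as elements of $K_\Gt$ with $g_1(0)=0$), which for the full $g$ means $g = \Gt g_3 + \ol{\Gt g_4} \in \Gt H^2 + \ol{\Gt H^2}$; this is condition (1) of the Corollary. Condition (2), $f$ constant, is unchanged and is condition (2) of the Corollary. For condition (3) of Theorem \ref{zero} we have $f \in K_\Gt + \CC\Gt$ together with $\ol{g'} \in H^2$ and $\ol{fg'} \in H^2$; examining Case I of that proof, condition (3) forces $g_1 = 0$, and the surviving requirement is exactly that $\ol{f g'} = \ol{f(g_1 + \ol{g_2})} \in H^2$, which is condition (3) of the Corollary (note $\ol{g'} = g_2 \in \ol{K_\Gt}\subset\ol{H^2}$, so $\bar{g'}\in H^2$ is automatic once $g_1=0$ and we rephrase, and the stated form keeps $g_1$ in for uniformity). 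Finally condition (4) of Theorem \ref{zero} involves $\Ga,\Gb \in \CC$ with (4a) $g' = g_1 + \Ga\bar\Gt g_1 + \Gb(1 - \Gt(0)\bar\Gt)$, (4b) $f(g_1+\Gb), f(\Ga - \Gt) \in K_\Gt + \CC\Gt$, (4c) $\bar\Gb\barf + \bar\Ga\Gt\bg_1 \in H^2$; conditions (4b) and (4c) do not mention $g'$ directly and carry over verbatim, while (4a) becomes $g = g_1 + \Ga\bar\Gt g_1 + \Gb(1 - \Gt(0)\bar\Gt) + \Gt g_3 + \ol{\Gt g_4}$ by adding back the $\Gt H^2 + \ol{\Gt H^2}$ part, giving condition (4a) of the Corollary.

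The main point to be careful about — and the step I expect to require the most attention — is the bookkeeping around the decomposition $g = g_1 + \ol{g_2} + \Gt g_3 + \ol{\Gt g_4}$: this representation is not unique (for instance $\Gt \in K_\Gt$-direction overlaps with $\Gt g_3$, and the splitting $\ol{g_2}$ versus $\ol{\Gt g_4}$ has a one-parameter ambiguity, matching the normalization $g_1(0)=0$ discussed before Theorem \ref{zero}). I would therefore fix once and for all a normalization of the decomposition, check that the reduction $A^\Gt_g = A^\Gt_{g'}$ is independent of which admissible decomposition is chosen (it is, since any two differ by an element of $\Gt H^2 + \ol{\Gt H^2}$, which is exactly $\ker$ of $g \mapsto A^\Gt_g$ by Theorem \ref{T0}), and then apply Theorem \ref{zero}. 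Everything else is a routine substitution, so no separate sufficiency/necessity argument is needed beyond quoting Theorem \ref{zero}.
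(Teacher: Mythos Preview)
Your proposal is correct and matches the paper's approach: the paper gives no separate proof of the Corollary, treating it as an immediate consequence of Theorem \ref{zero} via Theorem \ref{T0} (pass from $g$ to $g' = g_1 + \ol{g_2}$, apply the theorem, then restore the $\Gt H^2 + \ol{\Gt H^2}$ tail), which is exactly the reduction you outline. Your caution about the normalization $g_1(0)=0$ and the non-uniqueness of the decomposition is appropriate and is the only place any real checking is needed.
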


\section{Compact product}

In this section, we characterize compact product of truncated Hankel and Toeplitz operators with bounded symbol.

\begin{thm}\label{CC}
Let $f_i, g_i\in L^\infty$, $i=1,...,n$. Let $$K_1=\sum_{i=1}^nH^*_{f_i}H_{g_i}, \q K_2=\sum_{i=1}^nH_{f_i}T_{g_i}.$$
Then \begin{enumerate}
\item $K_1$ is compact if and only if for every $m\in M(H^\infty+C)$,
$$
\lim_{z\to m}||\sum_{i=1}^n (H_{f_i}k_z)\otimes (H_{g_i}k_z)||=0.
$$
\item $K_2$ is compact if and only if for every $m\in M(H^\infty+C)$,
$$
\lim_{z\to m}||\sum_{i=1}^n (H_{f_i}k_z)\otimes (H^*_{g_i}U k_{\bz})||=0,
$$
and
$$
\lim_{z\to m}||K_2^*Uk_{\bz}||=0.
$$
\end{enumerate}
\end{thm}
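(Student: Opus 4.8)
The plan is to reduce the compactness of $K_1$ and $K_2$ to the local criterion of Corollary \ref{gz} by expressing everything in terms of Toeplitz operators, and then to translate the resulting local conditions \eqref{guo2} into the tensor-norm statements via the identities in Lemma \ref{Tensor}.

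First I would handle $K_1=\sum H^*_{f_i}H_{g_i}$. Using Proposition \ref{prop}(1), write $H^*_{f_i}H_{g_i}=T_{\barf_i}T_{g_i}-T_{\barf_i g_i}$ (taking adjoints of part (1) with the appropriate symbols), so that $K_1$ is a finite sum of finite products of Toeplitz operators with $\Gs(K_1)=0$ (since $\Gs$ kills Hankel-type combinations, or directly because the symbol of $T_{\barf_i}T_{g_i}-T_{\barf_i g_i}$ is zero). Then Corollary \ref{gz} says $K_1$ is compact iff $\lim_{z\to m}\|K_1-T^*_{\phi_z}K_1T_{\phi_z}\|=0$ for every $m\in M(H^\infty+C)$. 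Now the first identity in Lemma \ref{Tensor} gives, for each $i$,
$$
H^*_{f_i}H_{g_i}-T^*_{\phi_z}H^*_{f_i}H_{g_i}T_{\phi_z}=V[(H_{f_i}k_z)\otimes (H_{g_i}k_z)]V^*,
$$
so summing, $K_1-T^*_{\phi_z}K_1T_{\phi_z}=V\bigl[\sum_i (H_{f_i}k_z)\otimes (H_{g_i}k_z)\bigr]V^*$; since $V$ is antiunitary this has the same norm as $\sum_i (H_{f_i}k_z)\otimes (H_{g_i}k_z)$, and part (1) follows.

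For $K_2=\sum H_{f_i}T_{g_i}$, I would first apply $V^{-1}(\cdot)V$ using \eqref{V}: $V^{-1}H_{f_i}T_{g_i}V=H^*_{f_i}V^{-1}T_{g_i}V$. One checks $V^{-1}T_{g_i}V=T^*_{\tilde g_i}$ or a similar Toeplitz-type operator, so $V^{-1}K_2V$ is again a finite sum of finite products of Toeplitz operators with zero symbol, and Corollary \ref{gz} applies to it; equivalently one works directly with $K_2$ and its adjoint. The point is that $K_2$ (or its $V$-conjugate) is compact iff both $\lim_{z\to m}\|K_2-S_{\phi_z}K_2T_{\bar{\phi_z}}\|=0$ \emph{and} the corresponding condition on the adjoint hold — here the extra adjoint condition is genuinely needed because, unlike $K_1$, the operator $K_2$ is not self-adjoint up to the $V$-conjugation, and its $V$-conjugate $\sum H^*_{f_i}T^*_{\tilde g_i}$ is not symmetric. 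Then the second identity in Lemma \ref{Tensor},
$$
S_{\phi_z}H_{f_i}T_{g_i}T_{\bar{\phi_z}}-H_{f_i}T_{g_i}=-(H_{f_i}T_{g_i}k_z)\otimes k_z+[(H_{f_i}k_{z})\otimes(H^*_{g_i}Uk_{\bz})]T_{\bar{\phi_z}},
$$
summed over $i$, gives $K_2-S_{\phi_z}K_2T_{\bar{\phi_z}}=(K_2k_z)\otimes k_z-\bigl[\sum_i (H_{f_i}k_z)\otimes(H^*_{g_i}Uk_{\bz})\bigr]T_{\bar{\phi_z}}$. Since $\|T_{\bar{\phi_z}}\|\le 1$ and since $\|K_2k_z\|\to 0$ at $m$ is controlled by $\|K_2^*\cdot\|$-type estimates (and conversely the rank-one term $(K_2k_z)\otimes k_z$ can be separated out), one sees that $\lim_{z\to m}\|K_2-S_{\phi_z}K_2T_{\bar{\phi_z}}\|=0$ together with $\lim_{z\to m}\|K_2^*Uk_{\bz}\|=0$ is equivalent to the two displayed conditions in part (2).

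The main obstacle I anticipate is the bookkeeping around the non-self-adjoint operator $K_2$: one must be careful about which conjugation ($V$, $U$, or adjoint) puts $K_2$ into the form required by Corollary \ref{gz}, and one must verify cleanly that $\lim_{z\to m}\|(K_2k_z)\otimes k_z\|=\lim_{z\to m}\|K_2k_z\|=0$ is implied by the stated conditions (so that it does not need to be listed separately) while $\lim_{z\to m}\|K_2^*Uk_{\bz}\|=0$ does. This amounts to showing that the rank-one "defect" term and the $T_{\bar{\phi_z}}$-tail term in the Lemma \ref{Tensor} identity decouple in the limit, using that $T_{\bar{\phi_z}}\to 0$ weakly and $k_z\to 0$ weakly as $|z|\to 1$, together with Lemma \ref{*lim} to propagate smallness of $\|H_{f_i}k_z\|$ and $\|H^*_{g_i}Uk_{\bz}\|$ through the extra Toeplitz factors. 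Once that decoupling is established, both directions of each equivalence are routine norm estimates.
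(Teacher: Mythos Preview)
Your treatment of part (1) is correct and matches the paper: $K_1$ is a finite sum of finite products of Toeplitz operators with $\Gs(K_1)=0$, so Corollary~\ref{gz} applies, and Lemma~\ref{Tensor} converts the local condition into the tensor-norm statement.

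For part (2), however, there is a genuine gap in the reduction step. The operator $K_2=\sum H_{f_i}T_{g_i}$ maps $H^2$ into $[H^2]^\perp$, and conjugating by $V$ does not fix this: one computes $V T_{g}V=S_{\bar g}$ (not a Toeplitz operator), so $V^{-1}K_2V=\sum H^*_{f_i}S_{\bar g_i}$ is an operator from $[H^2]^\perp$ to $H^2$, not a finite sum of finite products of Toeplitz operators on $H^2$. Hence Corollary~\ref{gz} does not apply to $K_2$ or to any unitary/antiunitary conjugate of it, and your claimed equivalence ``$K_2$ compact iff $\lim\|K_2-S_{\phi_z}K_2T_{\bar\phi_z}\|=0$ plus adjoint condition'' has no justification.

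The paper's key idea, which you are missing, is to apply Corollary~\ref{gz} to $K_2^*K_2$. This operator \emph{is} a finite sum of finite products of Toeplitz operators on $H^2$ (expand $T_{\bar g_i}H^*_{f_i}H_{f_j}T_{g_j}$ via Proposition~\ref{prop}), and it has zero symbol. For the sufficiency direction one then uses the commutation formula $K_2T_{\phi_z}=S_{\phi_z}K_2-F_z$ with $F_z=\sum(H_{f_i}k_z)\otimes(H^*_{g_i}Uk_{\bar z})$, together with the identity \eqref{S}, namely $S^*_{\phi_z}S_{\phi_z}=I-(Uk_{\bar z})\otimes(Uk_{\bar z})$, to expand
\[
T^*_{\phi_z}K_2^*K_2T_{\phi_z}=K_2^*K_2-(K_2^*Uk_{\bar z})\otimes(K_2^*Uk_{\bar z})-K_2^*S^*_{\phi_z}F_z-F_z^*S_{\phi_z}K_2+F_z^*F_z.
\]
This is precisely where the term $\|K_2^*Uk_{\bar z}\|$ enters naturally, and why it must be listed as a separate hypothesis. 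The necessity direction is easier: compactness of $K_2$ and weak convergence $k_z\to 0$, $Uk_{\bar z}\to 0$ give $\|K_2k_z\|\to 0$ and $\|K_2^*Uk_{\bar z}\|\to 0$ directly, and then an argument as in \cite{guo03}*{Lemma 9} yields $\lim\|K_2-S_{\phi_z}K_2T_{\bar\phi_z}\|=0$, after which the second identity in Lemma~\ref{Tensor} gives the tensor condition. Your proposed ``decoupling via $T_{\bar\phi_z}\to 0$ weakly and Lemma~\ref{*lim}'' does not substitute for this computation.
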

\begin{proof}
(1) By Proposition \ref{prop}, $K_1$ is a finite sum of finite products of Toeplitz operators with $\Gs(K_1)=0$, therefore the conclusion follows from Lemma \ref{Tensor} and Corollary \ref{gz}.

(2) Suppose $K_2$ is compact. Since $k_z\to 0$ weakly and $Uk_{\bz}\to 0$ weakly, we have
$$
\lim_{z\to m} ||K_2k_z||=||K_2^*Uk_{\bz}||=0.
$$
Using a similar argument as in \cite{guo03}*{Lemma 9}, we have
$$
\lim_{z\to m}||K_2-S_{\phi_z}K_2T_{\ol{\phi_z}}||=0.
$$
It then follows directly from Lemma \ref{Tensor}.

For the sufficiency part, notice that $K_2^*K_2$ is a finite sum of finite products of Toeplitz operators with $\Gs(K_2)=0$. By Corollary \ref{gz}, we only need to show that for every $m\in M(H^\infty+C)$,
\beq\label{CC1}
\lim_{z\to m}||K_2^*K_2-T^*_{\phi_z}K_2^*K_2T_{\phi_z}||=0.
\eeq
By \cite{chu15}*{Lemma 5.2},
$$
K_2T_{\phi_z}=S_{\phi_z}K_2-\sum_{i=1}^n (H_{f_i}k_z)\otimes (H^*_{g_i}U k_{\bz}).
$$
Let $$F_z=\sum_{i=1}^n (H_{f_i}k_z)\otimes (H^*_{g_i}U k_{\bz}).$$
Then
\begin{align*} T^*_{\phi_z}K_2^*K_2T_{\phi_z}&=(K_2T_{\phi_z})^*(K_2T_{\phi_z})=(K_2^*S^*_{\phi_z}-F^*_z)(S_{\phi_z}K_2-F_z)\\
&=K^*_2S^*_{\phi_z}S_{\phi_z}K_2-K^*_2S^*_{\phi_z}F_z-F^*_zS_{\phi_z}K_2+F^*_zF_z\\
&=K_2^*K_2+(K_2^*Uk_{\bz})\otimes(K_2^*Uk_{\bz})-K^*_2S^*_{\phi_z}F_z-F^*_zS_{\phi_z}K_2+F^*_zF_z.
\end{align*}
The last equality follows from \eqref{S}. Thus \eqref{CC1} holds.
\end{proof}

For convenience, we introduce the following notations. For functions $f_i, i=1,...,n$. Let $\vec{F}=(f_1,...,f_n)^{T}$, $[\vec{F}]_m=([f_1]_m,...,[f_n]_m)^{T}$. Say $\vec{F}\in L^\infty$ if each $f_i\in L^\infty$, for all $i$. Say $\vec{F}|_{S_m}\in H^\infty|_{S_m}$ if $f_i|_{S_m}\in H^\infty|_{S_m}$, for all $i$. For $u\in H^2$, let
$$H_{\vec{F}}u=\sum_{i=1}^n H_{f_i}u, \q H^*_{\vec{F}}u=\sum_{i=1}^n H^*_{f_i}u,  \q ||H_{\vec{F}}u||=\sum_{i=1}^n ||H_{f_i}u||.$$
For $\vec{G}=(g_1,...,g_n)^T\in L^\infty$, $v\in H^2$, let
$$
H_{\vec{F}}u\otimes H_{\vec{G}}v=\sum_{i=1}^n H_{f_i}u\otimes H_{g_i}v.
$$

The next lemma (part (1) appeared in \cite{guz98}) is essential in the proof of the main theorem.
\begin{lem}\label{qc}
Let $\vec{F}=(f_1,..,f_n)^T, \vec{G}=(g_1,...,g_n)^T$ and $\vec{F},\vec{G}\in L^\infty$. Let $m\in M(H^\infty+C)$. Assume $\{[f_1]_m,...,[f_N]_m\}$ forms a basis for $\{[f_1]_m,...,[f_n]_m\}$, for some $N\leq n$. Then there exists a scalar matrix $B$ such that
$$
([f_1]_m,..,[f_n]_m)^T=B([f_1]_m,..,[f_N]_m)^T.
$$
Let $A=(B, 0)_{n\times n}$.
\begin{enumerate}
\item If
\beq\label{qc1}
\lim_{z\to m}||H_{\vec{F}} k_z\otimes H_{\vec{G}}k_z||=0,
\eeq
then $(A^*\vec{G})|_{S_m}\in H^\infty|_{S_m}$.
\item If
\beq\label{qc2}
\lim_{z\to m}||H_{\vec{F}} k_z\otimes H^*_{\vec{G}}U k_{\bz}||=0,
\eeq
then
$(A^T\vec{G})|_{S_m}\in H^\infty|_{S_m}$. Moreover, if in addition $$\sum_{i=1}^n H_{f_i}T_{g_i}$$ is compact, then
$(\vec{G}^T A \vec{F})|_{S_m}\in H^\infty|_{S_m}$.

\end{enumerate}
\end{lem}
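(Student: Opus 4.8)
The plan is to exploit the rank-one-tensor structure of the hypotheses by pairing them with suitable test vectors, and then to translate everything into the quotient algebra $L^\infty_m/H^\infty_m$ via Lemma \ref{lim}. First I would fix the basis $\{[f_1]_m,\dots,[f_N]_m\}$ and the matrix $B$ (hence $A=(B,0)$); note that $A$ has the property $A[\vec F]_m=[\vec F]_m$ in the sense that $[f_i]_m-\sum_j A_{ij}[f_j]_m=0$, i.e.\ $f_i-\sum_j A_{ij}f_j\in H^\infty|_{S_m}$ for every $i$, which by Lemma \ref{lim} means $\lim_{z\to m}\|H_{f_i}k_z-\sum_j A_{ij}H_{f_j}k_z\|=0$ (and similarly for $H^*_{f_i}Uk_{\bz}$). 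The idea is that along a net $z_\alpha\to m$ the vectors $H_{f_i}k_{z_\alpha}$ are, up to vanishing error, genuinely ``$N$-dimensional'': one can find (after passing to a subnet, using a standard normal-families/weak-compactness argument as in \cite{guz98}) scalars so that a nontrivial combination of the first $N$ of them stays bounded below in norm.

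For part (1): apply the operator $\sum_i (H_{f_i}k_z)\otimes(H_{g_i}k_z)$ to the vector $H_{g_j}k_z$ for each $j$; this gives $\sum_i \langle H_{g_j}k_z,H_{g_i}k_z\rangle H_{f_i}k_z\to 0$. Using the reduction via $A$ to the basis indices and the lower bound on the surviving combination of $\{H_{f_i}k_{z_\alpha}\}_{i\le N}$, one extracts that the relevant Gram-type combinations $\langle H_{g_j}k_{z_\alpha},\,H_{(A\cdot)_i}k_{z_\alpha}\rangle$ tend to $0$; feeding this back and using $\|H_{g_j}k_z\|^2=\langle H_{g_j}k_z,H_{g_j}k_z\rangle$ together with the identity $A^*\!A$-type bookkeeping forces $\lim_{z\to m}\|H_{(A^*\vec G)_i}k_z\|=0$ for each $i$, which by Lemma \ref{lim} is exactly $(A^*\vec G)|_{S_m}\in H^\infty|_{S_m}$. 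For part (2) the same pairing is carried out with $H_{f_i}k_z$ on the left and $H^*_{g_i}Uk_{\bz}$ on the right: apply $\sum_i(H_{f_i}k_z)\otimes(H^*_{g_i}Uk_{\bz})$ to $Uk_{\bz}$-type and $H_{f_j}k_z$-type test vectors; now the ``left'' slot $H_{f_i}k_z$ is the one whose linear dependence is controlled by $A$, and the analogous extraction yields $\lim_{z\to m}\|H^*_{(A^T\vec G)_i}Uk_{\bz}\|=0$, i.e.\ $(A^T\vec G)|_{S_m}\in H^\infty|_{S_m}$, via part (3) of Lemma \ref{lim}.

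For the ``moreover'' clause, suppose in addition $K_2=\sum_i H_{f_i}T_{g_i}$ is compact. By Theorem \ref{CC}(2) we then also have $\lim_{z\to m}\|K_2^*Uk_{\bz}\|=0$, and $K_2^*Uk_{\bz}=\sum_i T_{\bar g_i}H^*_{f_i}Uk_{\bz}=\sum_i H^*_{\overline{f_ig_i}}\,$-type expressions can be massaged (using Proposition \ref{prop}) so that testing $F_z=\sum_i (H_{f_i}k_z)\otimes(H^*_{g_i}Uk_{\bz})$ against $Uk_{\bz}$ produces $\sum_i \langle Uk_{\bz},H^*_{g_i}Uk_{\bz}\rangle H_{f_i}k_z = F_z Uk_{\bz}\to 0$ while the inner products $\langle Uk_{\bz},H^*_{g_i}Uk_{\bz}\rangle$ encode $\langle g_i k_{\bz},k_{\bz}\rangle$-type quantities; combining the vanishing of $F_z Uk_{\bz}$ with the lower bound on the independent part of $\{H_{f_i}k_{z_\alpha}\}$ and with $\lim_{z\to m}\|K_2^*Uk_{\bz}\|=0$ isolates exactly the scalar $\vec G^T A\vec F$ evaluated near $m$, giving $\lim_{z\to m}|\langle (\vec G^T A\vec F)k_z,k_z\rangle - (\text{analytic part})|=0$ and hence $(\vec G^TA\vec F)|_{S_m}\in H^\infty|_{S_m}$.

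The main obstacle I anticipate is the extraction step: turning ``the sum of rank-one operators tends to $0$ in norm'' into ``each coefficient function is locally analytic'' requires a careful subnet/normal-families argument to guarantee that the $N$ basis vectors $H_{f_i}k_{z_\alpha}$ do not all degenerate simultaneously, and that the limiting linear-independence survives after quotienting by the $H^\infty|_{S_m}$ errors. This is precisely the technical heart of \cite{guz98} for part (1), and the adaptations in parts (2) and the ``moreover'' clause — where the left and right slots play asymmetric roles and one must additionally invoke the compactness of $K_2$ through $\|K_2^*Uk_{\bz}\|\to 0$ — are where the bookkeeping with the matrix $A$ and Proposition \ref{prop} must be done with care.
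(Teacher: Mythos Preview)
Your approach to part~(1) and the first half of part~(2) is essentially the paper's: reduce via the identity
\[
H_{\vec F}k_z\otimes H_{\vec G}k_z=H_{(I-A)\vec F}k_z\otimes H_{\vec G}k_z+H_{\vec F}k_z\otimes H_{A^*\vec G}k_z,
\]
where the first summand vanishes by Lemma~\ref{lim} since $((I-A)\vec F)|_{S_m}\in H^\infty|_{S_m}$; then assume some component of $A^*\vec G$ is not in $H^\infty|_{S_m}$, apply the reduced operator to the corresponding $H_{(A^*\vec G)_j}k_z$, normalize, pass to a subnet, and obtain a nontrivial relation $\sum_{i\le N}t^{(i)}_m[f_i]_m=0$, contradicting the basis property. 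Your description (``$A^*A$-type bookkeeping'') is vague but the mechanism is right. Note that the test vector should be a component of $A^*\vec G$, not of the original $\vec G$: this is what confines the resulting linear combination to the first $N$ indices and makes the contradiction clean.

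The ``moreover'' clause, however, has a genuine gap. Applying $F_z=\sum_i(H_{f_i}k_z)\otimes(H^*_{g_i}Uk_{\bz})$ to $Uk_{\bz}$ yields zero trivially: $H^*_{g_i}Uk_{\bz}\in H^2$ while $Uk_{\bz}\in[H^2]^\perp$, so every inner product $\langle Uk_{\bz},H^*_{g_i}Uk_{\bz}\rangle$ vanishes. And even setting this aside, $\|F_z\|\to0$ is already the hypothesis~\eqref{qc2}, so testing $F_z$ on any bounded vector can only reproduce information you already have; it cannot see the extra content coming from the compactness of $K_2$. The paper does \emph{not} use $F_z$ in this step at all. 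Instead it uses compactness directly to get $\lim_{z\to m}\|K_2k_z\|=0$ (since $k_z\to0$ weakly) and then decomposes $K_2k_z$ algebraically: write
\[
\sum_{i}H_{f_i}T_{g_i}=\sum_{i}H_{((I-A)\vec F)_i}T_{g_i}+\sum_{i\le N}H_{f_i}T_{(A^T\vec G)_i},
\]
kill the first sum by Lemma~\ref{*lim}, use Proposition~\ref{prop} to write $H_{f_i}T_{(A^T\vec G)_i}=H_{f_i(A^T\vec G)_i}-S_{f_i}H_{(A^T\vec G)_i}$, kill the $S_{f_i}H_{(A^T\vec G)_i}$ terms using the already-established $(A^T\vec G)|_{S_m}\in H^\infty|_{S_m}$, and conclude $\lim_{z\to m}\|H_{\vec G^TA\vec F}k_z\|=0$. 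The quantity $\vec G^TA\vec F$ emerges as the symbol of a single Hankel operator acting on $k_z$, to which Lemma~\ref{lim} applies; your proposed route via ``$\langle(\vec G^TA\vec F)k_z,k_z\rangle$'' is aiming at the wrong object.
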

\begin{proof}
(1) Suppose \eqref{qc1} holds and the conclusion is not true.
It is easy to see $A\vec{[F]_m}=\vec{[F]_m}$, then $((I-A)\vec{F})|_{S_m}\in H^\infty|_{S_m}$. Thus by Theorem \ref{lim}, $$\lim_{z\to m}||H_{(I-A)\vec{F}}k_z||=0.$$
On the other hand, we have
\beq\label{qcc1}
H_{\vec{F}} k_z\otimes H_{\vec{G}}k_z=H_{(I-A)\vec{F}} k_z\otimes H_{\vec{G}}k_z+H_{\vec{F}} k_z\otimes H_{A^*\vec{G}}k_z.\eeq
Thus
$$
\lim_{z\to m}||H_{\vec{F}} k_z\otimes H_{A^*\vec{G}}k_z||=0.
$$
We need to show $$\lim_{z\to m}||H_{A^*\vec{G}}k_z||=0.$$
Suppose it is not true. Let $A^*\vec{G}=(\tig_1,...,\tig_N, 0,...,0)^T$, we may assume $$\lim_{z\to m}||H_{\tig_1}k_z||>0.$$
Apply the operator $H_{\vec{F}} k_z\otimes H_{A^*\vec{G}}k_z$ to $H_{\tig_1}k_z$, we have
$$
\lim_{z\to m}||\sum_{i=1}^N \la H_{\tig_1}k_z, H_{\tig_i}k_z\ra H_{f_i}k_z||=0.
$$
Since $\frac{\la H_{\tig_1}k_z, H_{\tig_i}k_z\ra}{||H_{\tig_1}k_z||^2}$ is bounded, we may assume
$$
\lim_{z\to m}\frac{\la H_{\tig_1}k_z, H_{\tig_i}k_z\ra}{||H_{\tig_1}k_z||^2}=t^{(i)}_m.
$$
Therefore
$$
\lim_{z\to m}||H_{\sum_{i=1}^N t^{(i)}_m f_i}k_z||=0,
$$
and then $(\sum_{i=1}^N t^{(i)}_m f_i)|_{S_m}\in H^\infty|_{S_m}$, which contradicts the fact that $$\{[f_1]_m,...,[f_N]_m\}$$ is a basis.

(2) Notice that
$$
H_{\vec{F}} k_z\otimes H^*_{\vec{G}}Uk_{\bz}=H_{(I-A)\vec{F}} k_z\otimes H^*_{\vec{G}}Uk_{\bz}+H_{\vec{F}} k_z\otimes H^*_{A^T\vec{G}}Uk_{\bz}.
$$
Using a similar argument as above, we can get $(A^T\vec{G})|_{S_m}\in H^\infty|_{S_m}$ from \eqref{qc2}. Assume $\sum_{i=1}^n H_{f_i}T_{g_i}$ is compact. Since $k_z\to 0$ weakly as $|z|\to 1^-$, we have
$$
\lim_{z\to m}||\sum_{i=1}^n H_{f_i}T_{g_i}k_z||=0.
$$
Let $B=(b_{ij})$.
By Proposition \ref{prop}, we have
\begin{align*}
&\sum_{i=1}^n H_{f_i}T_{g_i}k_z=\sum_{i=1}^n (H_{f_i-\sum_{j=1}^N b_{i j}f_j}T_{g_i}+H_{\sum_{j=1}^N b_{i j}f_j}T_{g_i})k_z\\
=&\sum_{i=1}^n H_{f_i-\sum_{j=1}^N b_{i j}f_j}T_{g_i}k_z+\sum_{i=1}^N H_{f_i}T_{\sum_{j=1}^n b_{ji} g_i}k_z\\
=&\sum_{i=1}^n H_{f_i-\sum_{j=1}^N b_{i j}f_j}T_{g_i}k_z+\sum_{i=1}^N H_{f_i\sum_{j=1}^n b_{ji} g_i}k_z-\sum_{i=1}^N S_{f_i}H_{\sum_{j=1}^n b_{ji} g_i}k_z.
\end{align*}
Notice that $(f_i-\sum_{j=1}^N b_{i j}f_j)|_{S_m}\in H^\infty|_{S_m}$. By Lemma \ref{*lim}, we have
$$
\lim_{z\to m}||H_{f_i-\sum_{j=1}^N b_{i j}f_j}T_{g_i}k_z||=0.
$$
Also $(A^T\vec{G})|_{S_m}\in H^\infty|_{S_m}$ means that $$\lim_{z\to m}||H_{\sum_{j=1}^n b_{ji} g_i}k_z||=0.$$
Therefore, we have
$$
0=\lim_{z\to m}||\sum_{i=1}^N H_{f_i\sum_{j=1}^n b_{ji} g_i}k_z||=\lim_{z\to m}||H_{\vec{G}^T A \vec{F}}k_z||,
$$
and the conclusion follows from Lemma \ref{lim}.
\end{proof}

We also have a converse of Lemma \ref{qc}.
\begin{lem}\label{qcc}
Let $\vec{F}=(f_1,..,f_n)^T, \vec{G}=(g_1,...,g_n)^T$,  $\vec{H}=(h_1,...,h_n)^T$ and $\vec{F},\vec{G}, \vec{H}\in L^\infty$. Let $m\in M(H^\infty+C)$.
\begin{enumerate}
\item If there exists a scalar matrix $A_{n\times n}$ such that
$$(A\vec{F}-\vec{F})|_{S_m}\in H^\infty|_{S_m}, \, (A^*\vec{G})|_{S_m}\in H^\infty|_{S_m}.$$
Then \eqref{qc1} holds.
\item If there exists a scalar matrix $A_{n\times n}$ such that
$$(A\vec{F}-\vec{F})|_{S_m}\in H^\infty|_{S_m},\,(A^T\vec{H})|_{S_m}\in H^\infty|_{S_m},\, (\vec{H}^T A \vec{F})|_{S_m}\in H^\infty|_{S_m}$$
Then \eqref{qc2} holds and
\beq\label{qcc3}
\lim_{z\to m}||\sum_{i=1}^n T_{\bh_i}H^*_{f_i}Uk_{\bz} ||=0.
\eeq
\end{enumerate}
\end{lem}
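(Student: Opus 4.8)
The plan is to deduce both parts from the two decomposition identities already recorded inside the proof of Lemma~\ref{qc}, together with Lemmas~\ref{lim} and~\ref{*lim} and Proposition~\ref{prop}. The common device is to split each symbol as $f_i=\big((I-A)\vec F\big)_i+(A\vec F)_i$. Since $(A\vec F-\vec F)|_{S_m}\in H^\infty|_{S_m}$, every Hankel operator carried by a component of $(I-A)\vec F$ becomes negligible as $z\to m$ (Lemma~\ref{lim}), while regrouping the terms coming from $(A\vec F)_i$ transfers the matrix $A$ onto the companion slot, turning the remaining factors into $A^{*}\vec G$, $A^{T}\vec H$, and the scalar function $\vec H^{T}A\vec F$ — exactly the objects the other two hypotheses control. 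Throughout I will use that $\|\sum_i x_i\otimes y_i\|\le\sum_i\|x_i\|\,\|y_i\|$, that $\|k_z\|=\|Uk_{\bz}\|=1$, and that any Hankel or Toeplitz operator with an $L^\infty$ symbol is bounded.

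For part~(1), the hypotheses and Lemma~\ref{lim} give $\|H_{((I-A)\vec F)_i}k_z\|\to0$ and $\|H_{(A^{*}\vec G)_i}k_z\|\to0$ as $z\to m$, for every $i$. I would then substitute this into the identity~\eqref{qcc1},
$$H_{\vec F}k_z\otimes H_{\vec G}k_z=H_{(I-A)\vec F}k_z\otimes H_{\vec G}k_z+H_{\vec F}k_z\otimes H_{A^{*}\vec G}k_z,$$
and bound the first summand using that the norms $\|H_{g_i}k_z\|$ stay bounded while $\|H_{((I-A)\vec F)_i}k_z\|\to0$, and the second using that $\|H_{f_i}k_z\|$ stay bounded while $\|H_{(A^{*}\vec G)_i}k_z\|\to0$; both tend to $0$ in operator norm, which is~\eqref{qc1}.

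For part~(2), the vanishing of the mixed tensor — that is, \eqref{qc2} read with $\vec H$ in the role of $\vec G$ — is obtained in the same way from the companion identity
$$H_{\vec F}k_z\otimes H^{*}_{\vec H}Uk_{\bz}=H_{(I-A)\vec F}k_z\otimes H^{*}_{\vec H}Uk_{\bz}+H_{\vec F}k_z\otimes H^{*}_{A^{T}\vec H}Uk_{\bz}$$
used in the proof of Lemma~\ref{qc}(2), now invoking Lemma~\ref{lim}(3) for $A^{T}\vec H$ and the boundedness of $\|H_{f_i}k_z\|$ and $\|H^{*}_{h_i}Uk_{\bz}\|$. For~\eqref{qcc3}, I would put $K=\sum_i H_{f_i}T_{h_i}$, so $K^{*}=\sum_i T_{\overline{h_i}}H^{*}_{f_i}$, split $K=\sum_iH_{((I-A)\vec F)_i}T_{h_i}+\sum_iH_{(A\vec F)_i}T_{h_i}$, regroup the second sum as $\sum_iH_{f_i}T_{(A^{T}\vec H)_i}$, and apply Proposition~\ref{prop}(2) together with $\sum_i f_i(A^{T}\vec H)_i=\vec H^{T}A\vec F$ to rewrite it as $H_{\vec H^{T}A\vec F}-\sum_iS_{f_i}H_{(A^{T}\vec H)_i}$. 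Taking adjoints (using $S^{*}_{f}=S_{\overline f}$) gives
$$K^{*}=\sum_iT_{\overline{h_i}}H^{*}_{((I-A)\vec F)_i}+H^{*}_{\vec H^{T}A\vec F}-\sum_iH^{*}_{(A^{T}\vec H)_i}S_{\overline{f_i}};$$
evaluating at $Uk_{\bz}$, the first group vanishes in the limit by Lemma~\ref{lim}(3) (for $(I-A)\vec F$) and boundedness of $T_{\overline{h_i}}$, the middle term by Lemma~\ref{lim}(3) (for $\vec H^{T}A\vec F$), and the last group by Lemma~\ref{*lim}(2), since $(A^{T}\vec H)|_{S_m}\in H^\infty|_{S_m}$ already forces $\|H^{*}_{(A^{T}\vec H)_i}Uk_{\bz}\|\to0$; summing yields~\eqref{qcc3}.

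The bulk of this is bookkeeping, but I expect the one genuinely delicate point to be the last group of~\eqref{qcc3}: after the rearrangement $H_fT_g=H_{fg}-S_fH_g$ the leftover term carries an extra factor $S_{\overline{f_i}}$ on the ``bad'' slot, so boundedness alone is not enough — it is precisely Lemma~\ref{*lim}(2), that composing with $S_g$ does not destroy the decay of $H^{*}_\varphi Uk_{\bz}$, that closes the argument. A secondary thing to watch is the conjugate-linearity of $\varphi\mapsto H^{*}_\varphi$, which is what turns $A^{*}\vec G$ in part~(1) into $A^{T}\vec H$ in part~(2) and has to be tracked carefully when moving $A$ across tensor products and across operator products.
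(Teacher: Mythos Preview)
Your proof is correct and follows essentially the same route as the paper. For part~(1) you use exactly the decomposition~\eqref{qcc1} and Lemma~\ref{lim}; for part~(2) you obtain \eqref{qc2} by the analogous tensor split, and for \eqref{qcc3} your adjoint computation of $K^*$ reproduces the paper's direct identity
\[
\sum_i T_{\bh_i}H^*_{f_i}
=\sum_i T_{\bh_i}H^*_{((I-A)\vec F)_i}
+H^*_{\vec H^{T}A\vec F}
-\sum_i H^*_{(A^{T}\vec H)_i}S_{\barf_i},
\]
after which the three terms are killed by Lemma~\ref{lim}(3), Lemma~\ref{lim}(3), and Lemma~\ref{*lim}(2) respectively, exactly as you indicate. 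The only difference is cosmetic: the paper manipulates $\sum_i T_{\bh_i}H^*_{f_i}$ directly rather than first passing to $K=\sum_i H_{f_i}T_{h_i}$ and taking adjoints.
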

\begin{proof}
(1) The conditions imply
$$\lim_{z\to m}||H_{(I-A)\vec{F}}k_z||=\lim_{z\to m}||H_{A^*\vec{G}}k_z||=0.$$
It then follows from \eqref{qcc1}.

(2) Use a similar argument we get \eqref{qc2}. We can check \eqref{qcc3} using the following identity
\begin{align*}
&\sum_{i=1}^n T_{\bh_i}H^*_{f_i}Uk_{\bz}=\sum_{i=1}^n (T_{\bh_i}H^*_{f_i-\sum_{j=1}^n a_{i j}f_j}+T_{\bh_i}H^*_{\sum_{j=1}^n a_{i j}f_j})Uk_{\bz}\\
=&\sum_{i=1}^n T_{\bh_i}H^*_{f_i-\sum_{j=1}^n a_{i j}f_j}Uk_{\bz}+\sum_{i=1}^n T_{\sum_{j=1}^n \ba_{ji} \bh_i}H^*_{f_i}Uk_{\bz}\\
=&\sum_{i=1}^n T_{\bh_i}H^*_{f_i-\sum_{j=1}^n a_{i j}f_j}Uk_{\bz}+\sum_{i=1}^n H^*_{ f_i\sum_{j=1}^n a_{ji} h_i}Uk_{\bz}-\sum_{i=1}^n H^*_{\sum_{j=1}^n a_{ji} h_i} S_{\barf_i}Uk_{\bz}\\
=&\sum_{i=1}^n T_{\bh_i}H^*_{f_i-\sum_{j=1}^n a_{i j}f_j}Uk_{\bz}-\sum_{i=1}^n H^*_{\sum_{j=1}^n a_{ji} h_i} S_{\barf_i}Uk_{\bz}+H^*_{\vec{H}^TA\vec{F}}Uk_{\bz}.
\end{align*}

\end{proof}

The following lemma will be used several times later.
\begin{lem}\label{bar}
Let $f\in L^\infty$, $m\in M(H^\infty+C)$. Let $\Gt$ be an inner function.
\begin{enumerate}
\item If $f|_{S_m}, \barf|_{S_m}\in H^\infty|_{S_m}$, then $f|_{S_m}$ is constant.
\item If $f|_{S_m}, (\Gt\barf)|_{S_m}\in H^\infty|_{S_m}$, then $f|_{S_m}\in (K_\Gt+\CC\Gt)|_{S_m}.$
\end{enumerate}
\end{lem}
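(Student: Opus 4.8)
\textbf{Proof proposal for Lemma \ref{bar}.}

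The plan is to translate both statements into the language of the localized Hardy space $H^\infty_m = H^\infty|_{S_m}$ inside $L^\infty_m = L^\infty|_{S_m}$ and to exploit that $H^\infty_m$ is itself a uniform (function) algebra on $S_m$ whose quotient structure mimics that of $H^\infty$ on the circle. For part (1), the hypothesis says that both $f|_{S_m}$ and its complex conjugate lie in $H^\infty_m$. I would argue exactly as in the classical case $H^\infty \cap \overline{H^\infty} = \CC$: a function that is in the localized algebra together with its conjugate is a "real-and-imaginary-part-both-analytic" element, hence constant. Concretely, one way to see this is via the representing measure $\mu_m$ supported on $S_m$: for $h \in H^\infty_m$ one has $m(h^k) = m(h)^k$ for all $k\ge 0$ (multiplicativity), and if also $\bar h \in H^\infty_m$ then $m(|h|^2) = m(h\bar h) = m(h)\,\overline{m(h)} = |m(h)|^2$, and more generally $\int_{S_m} |h - m(h)|^2\, d\mu_m = m(|h|^2) - |m(h)|^2 = 0$; since $\mu_m$ has support $S_m$, $h = m(h)$ a.e.\ on $S_m$, i.e.\ $f|_{S_m}$ is constant. (Alternatively one can invoke that $S_m$ carries no nonconstant analytic structure beyond a point under $m$, but the $L^2(\mu_m)$ computation is the cleanest.)

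For part (2), the hypothesis is $f|_{S_m} \in H^\infty_m$ and $(\Gt\bar f)|_{S_m} \in H^\infty_m$. Recall from the excerpt that $K_\Gt + \CC\Gt = \{\,u \in H^2 : \Gt\bar u \in H^2\,\}$; so the localized version $(K_\Gt + \CC\Gt)|_{S_m}$ should be exactly $\{\, u \in H^\infty_m : \Gt|_{S_m}\,\bar u \in H^\infty_m \,\}$, and the two hypotheses say precisely that $f|_{S_m}$ belongs to this set. The step requiring care is justifying that the localization of $K_\Gt+\CC\Gt$ really is this "obvious" set, i.e.\ that a bounded function on $S_m$ which is in $H^\infty_m$ and whose product with $\Gt|_{S_m}$ has analytic conjugate actually arises as the restriction of some global $u\in K_\Gt+\CC\Gt$. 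For the use made of this lemma later (it is invoked to read off membership in a localized space) it may in fact suffice to define $(K_\Gt+\CC\Gt)|_{S_m}$ this way and simply verify the defining inclusions, so I would state and prove it in that form: $f|_{S_m}\in H^\infty_m$ gives analyticity of $f$ on $S_m$, and $(\Gt\bar f)|_{S_m}\in H^\infty_m$ is exactly the condition $\Gt|_{S_m}\overline{f|_{S_m}}\in H^\infty_m$, so $f|_{S_m}\in (K_\Gt+\CC\Gt)|_{S_m}$ by definition.

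The main obstacle, then, is conceptual rather than computational: pinning down the precise meaning of $\cM|_{S_m}$ for the non-closed, non-self-adjoint subspace $\cM = K_\Gt+\CC\Gt$ and making sure the assertion "$f|_{S_m}\in (K_\Gt+\CC\Gt)|_{S_m}$" is the one actually needed downstream. Once the localized algebra $H^\infty_m$ and its behaviour under multiplication by the inner function $\Gt$ are set up (multiplicativity of $m$, support of $\mu_m$, and the identification $\Gt|_{S_m}\cdot H^\infty_m$), both parts are short: part (1) is the $L^2(\mu_m)$ variance computation above, and part (2) is unwinding definitions. I would present part (1) in full via the representing-measure argument and part (2) as an immediate consequence of the definition of $(K_\Gt+\CC\Gt)|_{S_m}$, with a one-line remark reconciling it with the global description $\{u\in H^2:\Gt\bar u\in H^2\}$.
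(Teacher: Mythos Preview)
Your argument for part (1) is correct and is essentially the paper's proof: both compute $\int_{S_m}|f-m(f)|^{2}\,d\mu_m=0$ using multiplicativity of $m$ and conclude from the support of $\mu_m$. The paper is slightly more explicit about lifting $f|_{S_m}$ and $\barf|_{S_m}$ to global $g_1,g_2\in H^\infty$ before invoking multiplicativity of $m$ on $H^\infty$, but this is cosmetic.

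Part (2), however, has a genuine gap. In the paper's conventions, for a subspace $\cM\subset L^2$ the symbol $\cM|_{S_m}$ denotes the set of restrictions to $S_m$ of global elements of $\cM$. Thus $(K_\Gt+\CC\Gt)|_{S_m}$ is \emph{not} defined as $\{u\in H^\infty_m:\Gt|_{S_m}\bar u\in H^\infty_m\}$; that these two sets agree (for bounded $f$) is precisely what must be proved. You correctly identify this as ``the step requiring care,'' but then propose to sidestep it by adopting the local description as a definition, which reduces the lemma to a tautology. The ``one-line remark reconciling it with the global description'' that you defer is in fact the entire content of part (2), and it is not one line.

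The paper's proof is concrete and uses part (1) in an essential way: choose $g\in H^\infty$ with $f=g$ on $S_m$ and decompose $g=g_1+\Gt g_2$ with $g_1\in K_\Gt$ and $g_2\in H^\infty$. Then on $S_m$ one has $\Gt\barf=\Gt\bar g_1+\bar g_2$; since $\Gt\bar g_1\in H^\infty$ globally, the hypothesis $(\Gt\barf)|_{S_m}\in H^\infty|_{S_m}$ forces $\bar g_2|_{S_m}\in H^\infty|_{S_m}$. Part (1) now applies to $g_2$ and yields $g_2|_{S_m}=c$ for some constant $c$, so $f|_{S_m}=(g_1+c\Gt)|_{S_m}$ with $g_1+c\Gt$ a \emph{global} element of $K_\Gt+\CC\Gt$. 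This lifting-and-decomposition step is what is missing from your proposal.
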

\begin{proof}
\begin{enumerate}
\item Let $g_1, g_2\in H^\infty$ such that $f=g_1, \barf =g_2$ on $S_m$. Since $m$ is multiplicative on $H^\infty$, we have
\begin{align*}
&\int_{S_m}|f|^2 d\mu_m=\int_{S_m} g_1g_2 d\mu_m=m(g_1g_2)=m(g_1)m(g_2)\\
=&\int_{S_m}fd\mu_m \int_{S_m}\barf d\mu_m=|\int_{S_m}fd\mu_m|^2.
\end{align*}
Thus
$$
\int_{S_m}|f-\int_{S_m}fd\mu_m|^2d\mu_m=0,
$$
and then $f$ is the constant $\int_{S_m}fd\mu_m$ on $S_m$.
\item Let $g\in H^\infty$ such that $f=g$ on $S_m$. We can write $g=g_1+\Gt g_2$, where $g_1\in K_\Gt, g_2\in H^\infty$. Then $\Gt\barf=\Gt\bg=\Gt \bg_1+\bg_2$ on $S_m$. Since $\Gt\bg_1\in H^\infty$, we have $\bg_2|_{S_m}\in H^\infty|_{S_m}$. By part (1), $g_2|_{S_m}$ is constant and $g\in K_\Gt+\CC\Gt$.
\end{enumerate}
\end{proof}

Now we prove the main result in this section.
\begin{thm}\label{compact}
Let $f\in L^\infty$. Let $g=g_1+\ol{g_2}$, where $g_1, g_2 \in K_\Gt\cap H^\infty$ .Then $H^\Gt_f A^\Gt_g$ is compact if and only if for every $m\in M(H^\infty+C)$, one of the following holds
\begin{enumerate}
\item $\Gt|_{S_m}$ is constant.
\item $f|_{S_m}$ is constant.
\item
\begin{enumerate}
\item $\bg|_{S_m}, (\ol{f g})|_{S_m} \in H^\infty|_{S_m}.$
\item $f|_{S_m}\in (K_\Gt+\CC\Gt)|_{S_m}$.
\end{enumerate}
\item there exist $\Ga, \Gb\in \CC$ such that
\begin{enumerate}
\item $(\bar{\Gt}g_2-{\Ga} \bar{\Gt})|_{S_m}$ is constant.
\item $(\bg_1-\Gb\bar{\Gt})|_{S_m}=-\Ga.$
\end{enumerate}
\item there exists $\Ga\in\CC$ such that
\begin{enumerate}
\item $(\bar{\Gt}g_2-\Ga \bg_1)|_{S_m}$ is constant.
\item $(fg_1)|_{S_m}, (\bar{\Ga}f-\Gt f)|_{S_m}\in (K_\Gt+\CC\Gt)|_{S_m}.$
\end{enumerate}
\item there exist $\Ga, \Gb, C\in \CC$ such that
\begin{enumerate}
\item $(\bar{\Gt}-\Ga\bar{\Gt}g_2-\Gb \bg_1)|_{S_m}=C$.
\item $(f+\bar{\Ga} fg_1)|_{S_m}, (\bar{\Gb}f g_1-\Gt f)|_{S_m}\in(K_\Gt+\CC\Gt)|_{S_m}$.
\item $(\barf g_2-C\Gt\ol{fg_1})|_{S_m}\in H^\infty|_{S_m}$.
\end{enumerate}

\end{enumerate}
\end{thm}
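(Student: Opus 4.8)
The plan is to reduce the problem, exactly as in the proof of Theorem~\ref{zero}, to the simultaneous compactness of the two operators
$$
K_1=H^*_{\bar{\Gt}}H_{fg_1}+H^*_{\bar{\Gt}g_2}H_f-H^*_{\bg_1}H_{\Gt f}
$$
and
$$
K_2^*=H_{\bar{\Gt}}T_{\Gt\ol{fg_1}}+H_{\bar{\Gt}g_2}T_{\Gt\barf}-H_{\bg_1}T_{\barf},
$$
using Lemma~\ref{eq1} and Lemma~\ref{eq2} together with the remark that $H^\Gt_fA^\Gt_g$ is compact iff $H_{\bar{\Gt}}T_{\bg}H^*_f-H_{\Gt f}H^*_{\bg}$ and $T_{\bar{\Gt}f}T_gH^*_{\bar{\Gt}}-T_fH^*_{\bg}$ are compact. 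Then I would apply Theorem~\ref{CC} to translate compactness of $K_1$ into the condition $\lim_{z\to m}\|H_{\vec F}k_z\otimes H_{\vec G}k_z\|=0$ with $\vec F=(\bar{\Gt},\bar{\Gt}g_2,\bg_1)^T$, $\vec G=(fg_1,f,-\Gt f)^T$, and compactness of $K_2$ into the two conditions $\lim_{z\to m}\|H_{\vec F'}k_z\otimes H^*_{\vec G'}Uk_{\bz}\|=0$ and $\lim_{z\to m}\|K_2^*Uk_{\bz}\|=0$ for the corresponding vectors built from the second identity. Everything becomes a local statement on each support set $S_m$, $m\in M(H^\infty+C)$.

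The core of the argument is then a case analysis governed by Lemma~\ref{qc} and Lemma~\ref{qcc}, mirroring the linear-dependence dichotomy in the proof of Theorem~\ref{zero} but now performed modulo $H^\infty|_{S_m}$ rather than modulo $H^2$. I would fix $m$ and split according to the dimension of $\mathrm{span}\{[\bar{\Gt}]_m,[\bar{\Gt}g_2]_m,[\bg_1]_m\}$ in $L^\infty_m/H^\infty_m$. If these are independent, Lemma~\ref{qc}(1) forces $[fg_1]_m,[f]_m,[\Gt f]_m$ all to be in $H^\infty_m$, hence (via Lemma~\ref{bar}(1)) $f|_{S_m}$ constant, which is case (2). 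In each degenerate case I would extract, exactly as in \eqref{C}, scalars realizing the dependence — these are precisely the parameters $\Ga,\Gb,C$ appearing in conditions (4),(5),(6) — rewrite $K_1$ and $K_2^*$ after subtracting the corresponding vanishing Hankel operator (the analogue of \eqref{00} and \eqref{ht}), and then invoke Lemma~\ref{qc} again on the reduced pair of vectors, using Lemma~\ref{bar}(2) to convert the resulting conditions $[f(g_1+\Gb)]_m,[f(\Ga-\Gt)]_m\in H^\infty_m$ etc.\ into the stated membership in $(K_\Gt+\CC\Gt)|_{S_m}$. The extra conditions (4c), (5b), (6c) come from the $\lim\|K_2^*Uk_{\bz}\|=0$ requirement, handled via the last identity in \eqref{ht}-style computation together with Lemma~\ref{*lim}; these are the analogues of condition (4c) of Theorem~\ref{zero}. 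Case (3) is the degenerate situation with $g_1=0$, treated separately just as Case~I in Theorem~\ref{zero}. The sufficiency direction in each case is the routine check that the displayed local conditions, fed through Lemma~\ref{qcc} and Lemma~\ref{*lim}, make all the relevant tensor norms and $\|K_2^*Uk_{\bz}\|$ tend to $0$.

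The main obstacle I anticipate is bookkeeping rather than conceptual: one must carry the linear-algebra reductions through \emph{two} coupled operator identities ($K_1$ and $K_2^*$) simultaneously and reconcile the parameter constraints coming from each, and then verify that the resulting local conditions are not only necessary but also sufficient — i.e.\ that no information is lost when passing between $\|H_{\vec F}k_z\otimes H_{\vec G}k_z\|\to0$ and the membership statements. A subtle point is that the scalars $\Ga,\Gb,C$ are now allowed to depend on $m$, so one must be careful that the ``one of the following holds'' is stated pointwise in $m$ (as it is). I also expect the $K_2$ part to require the additional compactness input $(\vec G^TA\vec F)|_{S_m}\in H^\infty|_{S_m}$ from Lemma~\ref{qc}(2), which is exactly what produces condition (6c) and the $\Gt$-twisted terms; checking that this is the \emph{only} extra constraint, and that it is implied in cases (3),(4),(5) where it does not appear explicitly, will need a short separate verification in each case.
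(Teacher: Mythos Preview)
Your proposal is correct and follows essentially the same route as the paper's proof: reduce via Lemmas~\ref{eq1}, \ref{eq2} and Theorem~\ref{CC} to the three local limit conditions, then split on the rank of $\{[\bar\theta]_m,[\bar\theta g_2]_m,[\bar g_1]_m\}$ in $L^\infty_m/H^\infty_m$ and apply Lemmas~\ref{qc}, \ref{qcc}, \ref{bar}, \ref{*lim} exactly as you outline. Two minor labeling slips to fix when you write it up: there is no subcondition ``(4c)'' in this theorem (case~(4) constrains only $g$, not $f$), and case~(3) arises from $g_1|_{S_m}$ being constant in the rank-$2$ analysis rather than from $g_1=0$.
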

\begin{proof}
By Lemmas \ref{eq1} and \ref{eq2}, $H^\Gt_f A^\Gt_g$ is compact if and only if
$$H^*_{\bar{\Gt}}H_{fg_1}+H^*_{\bar{\Gt}g_2}H_f+H^*_{\bg_1}H_{-\Gt f}$$
and
$$H_{\bar{\Gt}}T_{\Gt\ol{fg_1}}+H_{\bar{\Gt}g_2}T_{\Gt\barf}+H_{\bg_1}T_{-\barf}$$
are compact.
By Theorem \ref{CC}, the above conditions are equivalent to:
for every $m\in M(H^\infty+C)$,
\beq\label{c03}
\lim_{z\to m}||H_{\bar{\Gt}}k_z\otimes H_{fg_1}k_z+H_{\bar{\Gt}g_2}k_z\otimes H_f k_z+H_{\bg_1}k_z\otimes H_{-\Gt f}k_z||=0,
\eeq
\beq\label{c04}
\lim_{z\to m}||H_{\bar{\Gt}}k_z\otimes H^*_{\Gt\ol{fg_1}}U k_{\bz}+H_{\bar{\Gt}g_2}k_z\otimes H^*_{\Gt\barf}U k_{\bz}+H_{\bg_1}k_z\otimes H^*_{-\barf}U k_{\bz}||=0,
\eeq
and
\beq\label{c05}
\lim_{z\to m}||(T_{\bar{\Gt}f g_1}H^*_{\bar{\Gt}}+T_{\bar{\Gt}f}H^*_{\bar{\Gt}g_2}+T_{-f}H^*_{\bg_1} )Uk_{\bz}||=0.
\eeq

{\bf Necessity:}
Suppose $H^\Gt_f A^\Gt_g$ is compact. We consider the rank of $$\{[\bar{\Gt}]_m, [\bar{\Gt}g_2]_m, [\bg_1]_m\}.$$

Case I: $$rank\{[\bar{\Gt}]_m, [\bar{\Gt}g_2]_m, [\bg_1]_m\}=0.$$ In particular, $[\bar{\Gt}]_m=0$. which means $\bar{\Gt}|_{S_m}\in H^\infty|_{S_m}.$ By Lemma \ref{bar}, condition (1) holds.

Case II: $$rank\{[\bar{\Gt}]_m, [\bar{\Gt}g_2]_m, [\bg_1]_m\}=3.$$ Then $\{[\bar{\Gt}]_m, [\bar{\Gt}g_2]_m, [\bg_1]_m\}$ are linearly independent. By Lemma \ref{qc}, we have $f|_{S_m}\in H^\infty|_{S_m}, \barf|_{S_,}\in H^\infty|_{S_m}$. Thus condition (2) holds.

Case III: $$rank\{[\bar{\Gt}]_m, [\bar{\Gt}g_2]_m, [\bg_1]_m\}=1.$$ Notice that we only need to consider the case: $$\{[\bar{\Gt}]_m, [\bar{\Gt}g_2]_m, [\bg_1]_m\}=span\{[\bar{\Gt}]_m\},$$ which means there exist $\Ga, \Gb\in \CC$ such that $(\bar{\Gt}g_2-\Ga \bar{\Gt})|_{S_m}, (\bg_1-\Gb \bar{\Gt})|_{S_m}$ are constants. In fact, if $$\{[\bar{\Gt}]_m, [\bar{\Gt}g_2]_m, [\bg_1]_m\}=span\{[\bar{\Gt}g_2]_m\},$$ we have
$(\bar{\Gt}-\tilde{\Ga}\bar{\Gt}g_2)|_{S_m}, (\bg_1-\tilde{\Gb}\bar{\Gt}g_2)|_{S_m}$ are constants, for some $\tilde{\Ga}, \tilde{\Gb}\in\CC$. If $\tilde{\Ga}=0$, then $\Gt|_{S_m}$ is a constant, which is Case I. If not, we have $(\bar{\Gt}g_2-{1\over \tilde{\Ga}} \bar{\Gt})|_{S_m}, (\bg_1-{\tilde{\Gb}\over \tilde{\Ga}} \bar{\Gt})|_{S_m}$ are constants.

Take \beq\label{4}\vec{F}=(\bar{\Gt}, \bar{\Gt}g_2, \bg_1)^T, \vec{G}=(fg_1, f, -\Gt f)^T, \vec{H}=(\Gt\ol{fg_1}, \Gt\barf, -\barf)^T,\eeq
and $A=\begin{pmatrix}
1 & 0 & 0\\
\Ga & 0 & 0\\
\Gb& 0 &0
\end{pmatrix}$ in Lemma \ref{qc}, and use
$$
A^*\vec{G}=(fg_1+\bar{\Ga}f-\bar{\Gb}\Gt f,0 ,0)^T,\q A^T\vec{H}=(\Gt\ol{fg_1}+\Ga\Gt\barf-\Gb \barf, 0, 0)^T,
$$
we have
$u|_{S_m}, (\Gt\bu)|_{S_m}\in H^\infty|_{S_m}$, where $$u=f(g_1+\bar{\Ga} -\bar{\Gb} \Gt).$$
In addition, $(\vec{H}^TA\vec{F})|_{S_m}=\bu|_{S_m}\in H^\infty|_{S_m}$, and thus $u|_{S_m}$ is constant.
Let $(\bg_1-\Gb\bar{\Gt})|_{S_m}=C$ ($C$ is a constant). Then $$u|_{S_m}=(f(\bar{C}+\bar{\Ga}))|_{S_m}.$$
If $C+\Ga=0$, then we get condition (4).
If $C+\Ga\neq 0$, this means $f|_{S_m}\in H^\infty|_{S_m}$, which gives condition (2).

Case IV: $$rank\{[\bar{\Gt}]_m, [\bar{\Gt}g_2]_m, [\bg_1]_m\}=2.$$
Case IV(A): If $$\{[\bar{\Gt}]_m, [\bar{\Gt}g_2]_m, [\bg_1]_m\}=span\{[\bar{\Gt}g_2]_m, [\bg_1]_m\},$$
then there exist $\Ga, \Gb\in\CC$ such that $(\bar{\Gt}-\Ga \bar{\Gt}g_2-\Gb \bg_1)|_{S_m}=C$, and
$$\lim_{z\to m}||H_{\bar{\Gt}-\Ga \bar{\Gt}g_2-\Gb \bg_1}k_z||=0.$$

Take \beq\label{6}\vec{F}=(\bar{\Gt}g_2, \bg_1, \bar{\Gt})^T, \vec{G}=( f, -\Gt f, fg_1)^T, \vec{H}=( \Gt\barf, -\barf, \Gt\ol{fg_1})^T,\eeq
and $A=
\begin{pmatrix}
1 & 0 & 0\\
0 & 1 & 0\\
\Ga & \Gb & 0
\end{pmatrix}$
in Lemma \ref{qc}, we have
$$(A^*\vec{G})|_{S_m}=(f+\bar{\Ga}fg_1, -\Gt f+\bar{\Gb}fg_1, 0)^T|_{S_m}\in H^\infty|_{S_m}, $$
and
$$(A^T\vec{H})|_{S_m}=(\Gt \barf+\Ga\Gt\ol{fg_1}, -\barf+\Gb\Gt\ol{fg_1}, 0)^T|_{S_m}\in H^\infty|_{S_m}.$$
Thus
$u|_{S_m}, (\Gt\bu)|_{S_m}, v|_{S_m}, (\Gt\bv)|_{S_m}\in H^\infty|_{S_m}$, where
$$
u=f+\bar{\Ga} fg_1,\q v=f\bar{\Gb} g_1-\Gt f.
$$
On the other hand, Lemma \ref{bar} implies that $u|_{S_m}, v|_{S_m}\in (K_\Gt+\CC\Gt)|_{S_m}$.
Using Lemma \ref{qc} again, we have
\begin{align*}
&(\vec{H}^TA\vec{F})|_{S_m}=(g_2\bu+\Gt\bg_1\bv)|_{S_m}\\
=&(\barf g_2(1+\Ga\bg_1)+\Gt\ol{fg_1}(\Gb\bg_1-\bar{\Gt}))|_{S_m}\\
=&(\barf g_2(1+\Ga\bg_1)-\Gt\ol{fg_1}(\Ga\bar{\Gt}g_2+C))|_{S_m}\\
=&(\barf(g_2-C\Gt\bg_1))|_{S_m}\in H^\infty|_{S_m},
\end{align*}
which gives condition (6).

If $$\{[\bar{\Gt}]_m, [\bar{\Gt}g_2]_m, [\bg_1]_m\}=span\{[\bar{\Gt}]_m, [\bar{\Gt}g_2]_m\},$$
or $$\{[\bar{\Gt}]_m, [\bar{\Gt}g_2]_m, [\bg_1]_m\}=span\{[\bar{\Gt}]_m, [\bg_1]_m\},$$
then there exist $t_1, t_2, t_3\in\CC$ such that
$(t_1\bar{\Gt}+t_2\bar{\Gt}g_2+t_3\bg_1)|_{S_m}$ is constant, where $|t_2|^2+|t_3|^2\neq 0.$
If $t_1\neq 0$, then we have Case IV(A). If $t_1=0$, we have the following two cases:
$g_1|_{S_m}$ is constant, or $(\bar{\Gt}g_2-\Ga \bg_1)|_{S_m}$ is constant for some $\Ga\in\CC$.

Case IV(B): $g_1|_{S_m}=C\in\CC$.
Then $$\lim_{z\to m}||H_{\bg_1}k_z||=0.$$
Since we also assume $[\Gt]_m, [\bar{\Gt}g_2]_m$ are linearly independent, by Lemma \ref{qc} and \eqref{c03}, \eqref{c04}, we have
$(fg_1)|_{S_m}, f|_{S_m}, (\Gt\ol{fg_1})|_{S_m}, (\Gt\barf)|_{S_m}$ and $$(\bar{\Gt}\cdot\Gt \ol{fg_1}+\bar{\Gt}g_2\cdot \Gt\barf)|_{S_m}=(\barf(\bg_1+g_2))|_{S_m}$$ are in $H^\infty|_{S_m}.$ Thus $f|_{S_m}\in (K_\Gt+\CC\Gt)|_{S_m}$ and $(\barf(\bar{C}+g_2))|_{S_m}=(\ol{f g})|_{S_m}\in H^\infty|_{S_m}$, and condition (3) holds.

Case IV(C): $(\bar{\Gt}g_2-\Ga \bg_1)|_{S_m}$ is constant for some $\Ga\in\CC$.
We use Lemma \ref{qc} for
\beq\label{5}\vec{F}=(\bar{\Gt}, \bg_1, \bar{\Gt}g_2)^T, \vec{G}=(fg_1,-\Gt f, f)^T, \vec{H}=( \Gt\ol{fg_1}, -\barf, \Gt\barf)^T,\eeq
and $A=
\begin{pmatrix}
1 & 0 & 0\\
0 & 1 & 0\\
0 & \Ga & 0
\end{pmatrix}$. Then
$$(A^*\vec{G})|_{S_m}=(fg_1, -\Gt f+\bar{\Ga}f, 0)^T|_{S_m}\in H^\infty|_{S_m},$$
$$(A^T\vec{H})|_{S_m}=(\Gt\ol{fg_1}, -\barf+\Ga\Gt\barf, 0)^T|_{S_m}\in H^\infty|_{S_m},$$
and $$(\vec{H}^TA\vec{F})|_{S_m}=\Ga(\Gt\ol{fg_1})|_{S_m}\in H^\infty|_{S_m}.$$
By Lemma \ref{bar}, we have condition (5).

{\bf Sufficiency:} For every $m\in M(H^\infty+C)$, we need to check \eqref{c03}, \eqref{c04}, \eqref{c05} under each of the conditions (1)--(6).

If condition (1) holds, then$$\lim_{z\to m}||H_{\bar{\Gt}}k_z||=0.$$ By Lemma \ref{*lim} we have
$$
\lim_{z\to m}||H_{\bar{\Gt}g_2}k_z||=\lim_{z\to m}||H_{\bar{\Gt}}T_{g_2}k_z||=0,
$$
$$
\lim_{z\to m}||H_{\bg_1}k_z||=\lim_{z\to m}||H_{\bar{\Gt}}T_{\Gt \bg_1}k_z||=0.
$$
Thus \eqref{c05} follows from Lemma \ref{lim}.

If condition (2) holds, then
$$
\lim_{z\to m}||H_{f}k_z||=\lim_{z\to m}||H^*_{-\barf}Uk_{\bz}||=0,
$$
Notice that
$$
H_{fg_1}=H_fT_{g_1}, H_{-\Gt f}=H_fT_{-\Gt}, H^*_{\Gt \ol{fg_1}}=H^*_{\barf}S_{\Gt\bg_1}, H^*_{\Gt\barf}=H^*_{\barf}S_{\Gt},
$$
Lemma \ref{*lim} implies
\beq\label{(2)}
\lim_{z\to m}||H_{fg_1}k_z||=\lim_{z\to m}||H_{-\Gt f}k_z||=\lim_{z\to m}||H^*_{\Gt \ol{fg_1}}Uk_{\bz}||=\lim_{z\to m}||H^*_{\Gt\barf}Uk_{\bz}||=0.
\eeq
Thus \eqref{c03}, \eqref{c04} hold. To check \eqref{c05}, we use
\begin{align*}
&T_{\bar{\Gt}f g_1}H^*_{\bar{\Gt}}+T_{\bar{\Gt}f}H^*_{\bar{\Gt}g_2}+T_{-f}H^*_{\bg_1}\\
=&H^*_{\ol{fg_1}}-H^*_{\Gt\ol{f g_1}}S_{\Gt}+H^*_{\barf g_2}-H^*_{\Gt\barf}S_{\Gt\bg_2}+H^*_{-\ol{fg_1}}-H^*_{-\barf}S_{g_1}\\
=&H^*_{\barf g_2}-H^*_{\Gt\ol{f g_1}}S_{\bar{\Gt}}-H^*_{\Gt\barf}S_{\bar{\Gt}g_2}-H^*_{-\barf}S_{\bg_1}.
\end{align*}
Then \eqref{c05} follows from \eqref{(2)} and Lemma \ref{*lim}.

If condition (3) holds, then
$$
\lim_{z\to m}||H_{g_1}k_z||=\lim_{z\to m}||H_{fg_1}k_z||=\lim_{z\to m}||H^*_{\Gt\barf}Uk_{\bz}||=\lim_{z\to m}||H^*_{\Gt\ol{fg_1}}Uk_{\bz}||=0,
$$
which implies \eqref{c03}, \eqref{c04}. On the other hand, by Lemma \ref{*lim} and
$$
T_{\bar{\Gt}f g_1}H^*_{\bar{\Gt}}+T_{\bar{\Gt}f}H^*_{\bar{\Gt}g_2}+T_{-f}H^*_{\bg_1}
=H^*_{\ol{fg_1}}-H^*_{\Gt\ol{f g_1}}S_{{\Gt}}+H^*_{\barf g_2}-H^*_{\Gt\barf}S_{\Gt\bg_2}+T_{-f}H^*_{\bg_1},
$$
we get \eqref{c05}.

If condition (4) holds, we use \eqref{4} in Lemma \ref{qcc}.

If condition (5) holds, we use \eqref{5} in Lemma \ref{qcc}.

If condition (6) holds, we use \eqref{6} in Lemma \ref{qcc}.
\end{proof}

\bibliography{references}
\end{document}